\documentclass[12pt,english]{smfart}

\usepackage[T1]{fontenc}
\usepackage[english,french]{babel}

\usepackage{amsfonts}
\usepackage{amssymb,url,xspace,smfthm}
\usepackage{csquotes}
\usepackage{stmaryrd}
\usepackage[all]{xy}
\usepackage{enumitem}
\usepackage{hyperref}
\usepackage{amsmath}

\usepackage{tikz}

\newcommand\numberthis{\addtocounter{equation}{1}\tag{\theequation}}

\newtheorem{theorem}{Theorem}[section]
\newtheorem{proposition}[theorem]{Proposition}
\theoremstyle{remark}
  \newtheorem{remark}[theorem]{Remark}
  
\theoremstyle{definition}
  \newtheorem{definition}[theorem]{Definition}

\theoremstyle{definition}
    
\theoremstyle{definition}
    
\theoremstyle{definition}
    \newtheorem{prop-def}[theorem]{Proposition-Definition}

\theoremstyle{definition}
    \newtheorem{lemma}[theorem]{Lemma}

\newcommand{\LL}{\mathbb{L}}

\newcommand{\R}{\mathbb{R}}
\newcommand{\PP}{\mathbb{P}}
\newcommand{\C}{\mathbb{C}}
\newcommand{\Q}{\mathbb{Q}}
\newcommand{\N}{\mathbb{N}}
\newcommand{\Z}{\mathbb{Z}}
\newcommand{\ac}{\overline{ac}}
\newcommand{\nash}{\hat{X}}
\newcommand{\field}{\mathbb{C}(\!(t)\!)}
\newcommand{\sheaf}{\mathcal{O}_{\hat{X}}(1)}

\title[Motivic local density of isolated singularities of alg. var.]{Motivic local density of isolated singularities of algebraic varieties}
\author{Sidonie Ratajczak}
\address{Univ. Lille, CNRS, UMR 8524 - Laboratoire Paul Painlevé, F-59000 Lille, France}

\begin{document}

\begin{abstract}
The goal of this paper is to compute the motivic local density of an isolated singularity of an algebraic variety, in order to explain its link with algebraic multiplicity. We compute the density as the residue in 1 of a Zeta function associated to normalized local volumes and we use a resolution of the singularity to get the formula. 
\end{abstract}

\maketitle

\tableofcontents

\section{Introduction}

Non-Archimedean geometry has emerged as a powerful setting to define and compare additive invariants attached to singularities of algebraic varieties. In particular, we study in this paper the motivic local density. This notion had been developed in different settings, defined as limits of normalized local volumes, but one can also compute local density using a tangent cone with multiplicities. 

We consider $K$ a field equipped with a distance and a measure $\mu$, the local density of a set $X \subset K^n$ of dimension $d$ at a point $x$ is defined as the following limit, if it exists: 
$$
\Theta_d(X,x) = \lim_{r \longrightarrow 0} \frac{\mu_d(X \cap B_n(x,r))}{\mu_d(B_d(x,r))}.
$$

In \cite{lelong_integration_1957}, Lelong first shows the existence of the limit if $X$ is a complex analytic manifold, and Thie in \cite{thie_lelong_1967} expresses the local density as a sum of the local densities of components of the tangent cone of $X$ at $x$ with multiplicities. The latter implies that $\Theta_d(X,x)$ is a positive integer. In \cite{kurdyka_densite_1989}, Kurdyka and Raby show the existence of the local density if $X$ is real subanalytic. They prove an analogue of Thie’s result by expressing it as a finite sum of densities of components of the positive tangent cone, with multiplicities, but the local density is no longer an integer.

\vspace{\baselineskip}

Cluckers, Comte and Loeser show in \cite{cluckers_local_2012} the existence of a notion of local density for a semi-algebraic or subanalytic subset $X \subset K^n$, where $K$ is a finite extension of $\Q_p$. The definition is slightly different than just the analogous of the real one, with Lebesgue measure replaced by Haar measure, because the limit does not exist in general. The sequence of normalized local volumes has a cyclical convergence, they define the local density as the mean value of the different limits. In this $p$-adic setting, there is no natural notion of tangent cone, therefore they
are led to study for any $\Lambda$, a multiplicative subgroup of $K^\times$ of finite index, a $\Lambda-$tangent cone. They show the existence of a distinguished tangent cone. Then they assign multiplicities to the tangent cone and show as in the real case that the local density can be computed on this tangent cone with multiplicities.

\vspace{\baselineskip}

These results have been generalized to the case of the Laurent series $\field$, and more generally to discrete valued Henselian fields of characteristic zero by A. Forey \cite{forey_motivic_2017}, using motivic integration to give a sense to the volume. It is not a classical measure theory as Lebesgue, the volume is no longer a real number but an element of the Grothendieck ring of varieties on the residue field, denoted $K_0(\text{Var}_k)$. Denote $\mu_d$ the $d$-dimensional measure defined in \cite{cluckers_constructible_2008}, and let $B_n(x,m) = \{y \in k(\!(t)\!)^n \mid v(x-y) \geq m\}$ the $n$-dimensional ball around $x \in k(\!(t)\!)^n$ of valuative radius $m \in \Z$, for $k$ an algebraically closed field of characteristic zero. Consider $X$ a variety over $k$ of dimension $d$ with an isolated singularity $O$, and $\underline{X} \subset h[n,0,0]$ the definable set associated to the $k[\![t]\!]$-points of $X$. We have: 
$$
\theta_m := \frac{\mu_d(\underline{X}\cap B_n(x,m))}{\mu_d(B_d(x,m))} \in \mathcal{M}_k
$$
where $\mathcal{M}_k$ denotes a localization of the Grothendieck ring (See Section \ref{section : mot int}), and there exists an integer $e > 0$ such that for any $i = 0, 1, ..., e - 1$, the subsequences $(\theta_{ke+i})_{k \geq 0}$ converge to some $d_i \in \mathcal{M}_k$, where the topology is induced by the degree in $\LL$. As in the $p$-adic case, we define the motivic local density as the mean value of the $d_i$. Since the motivic local density is independent of the choice of the embedding (see Proposition \ref{prop : indep embed}), we have that:
$$
\Theta_d^\text{mot}(X,O):= \frac{1}{e} \sum_{i=0}^{e-1} d_i \in \Q \otimes \mathcal{M}_k.
$$

In this case, we also consider $\Lambda$-tangent cones for all $\Lambda$ of the form $\Lambda_n=\{\lambda \in k(\!(t)\!)^\times \mid \ac(\lambda)=1, \text{ord}(\lambda) \equiv 0 \text{ mod } n\}$. There is a distinguished tangent cone, and the local density of $X$ at $x$ can be computed after assigning multiplicities to this cone.

\vspace{\baselineskip}

In this paper, we prove that the motivic local density can be computed as the residue in 1 of a Zeta function defined with the normalized local volumes. Let $\mathcal{M}_k=K_0(\text{Var}_k)\Big[\LL^{-1},\Big\{\frac{1}{1-\LL^{-i}}\Big\}_{i>0}\Big]$, we consider the ring $R:=\Q \otimes \mathcal{M}_k[T,(1-\LL^{-a}T^b)^{-1}]$, $a\in \Z, b \geq 1$. We know by rationality results (See for example \cite{cluckers_constructible_2008}) that for a constructible motivic function with parameter $n$, the power serie lies in this ring. We then define $S$ the subset of $R$ consisting of elements of the form $\frac{P}{Q}$ with $P \in \mathcal{M}_k[T], Q = \prod_{i=1}^r(1-\LL^{-a_i}T^{b_i})$ such that at most one of the $a_i$ equal 0. Informally, this ring consists of rational fractions with a pole of order at most 1 in 1. We define $\text{Res}_{T=1}(\frac{P}{Q})=\text{Ev}_{T=1}((1-T)\frac{P(T)}{Q(T)})$, see Definition \ref{def: residue}.

\vspace{\baselineskip}

\begin{theorem}\label{thm: intro zeta}
    Let $D$ be a definable set of $h[m,0,0]$ of dimension $d$. The motivic local density of $D$ at a point $x$ can be computed as the residue in 1 of a Zeta function of normalized local volumes. More precisely, if one sets
    $$
    Z(T) = \sum_{n \geq 0} \frac{\mu_d(D \cap B_m(x,n))}{\mu_d(B_d(x,n))}T^n,
    $$
    then $Z(T) \in S$ and
    $$
    \Theta_d^\text{mot}(D,x)=\mathrm{Res}_{T=1}(Z(T)).
    $$ 
\end{theorem}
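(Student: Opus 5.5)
The plan is to use rationality of generating series of constructible motivic functions, together with Forey's structure result on the normalized volumes $\theta_n$, and then do a purely algebraic computation of residues. Set $\theta_n := \mu_d(D\cap B_m(x,n))/\mu_d(B_d(x,n))$. The map $n \mapsto \mu_d(D\cap B_m(x,n))$ is a constructible motivic function with parameter $n$, obtained by integrating the characteristic function of the definable family $\{(y,n) : y\in D,\ v(y-x)\ge n\}$ over the fibres. Also $\mu_d(B_d(x,n)) = \LL^{-nd}$ (up to the normalization used), so $\theta_n = \LL^{nd}\mu_d(D\cap B_m(x,n))$ is again constructible in $n$. By the rationality theorem of Cluckers--Loeser, $Z(T)$ therefore lies in $R$ and is written as $P/Q$ with $Q=\prod_i(1-\LL^{-a_i}T^{b_i})$.

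To see that $Z(T)\in S$, I will use Forey's result: there is $e>0$ such that each subsequence $(\theta_{ke+i})_k$ converges in $\mathcal M_k$ for the $\LL$-degree topology. In particular $\theta_n$ is bounded in degree. I will then expand $P/Q$ in partial fractions, after grouping all factors $1-\LL^{-a}T^{b}$ into the form $1-\LL^{-a}T^{b'}$ with a common period $b'$. The coefficient of $T^n$ is then a finite sum of terms of the form (polynomial in $n$) $\cdot\,\LL^{-an/b'}$, taken along residue classes of $n$ mod $b'$.

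\begin{itemize}
\item Terms with $a<0$ diverge in degree, so they must cancel.
\item Terms with $a>0$ tend to $0$.
\item Terms with $a=0$ and polynomial degree $\ge1$ in $n$ diverge, so they must also cancel.
\end{itemize}

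Consequently, after simplification, $Z$ can be written with at most one factor of $Q$ having $a_i=0$, namely $1-T^{e'}$ for a suitable multiple $e'$ of $e$, occurring to the first power. This gives $Z\in S$.

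This step is where I expect the main difficulty. The cancellation must be justified inside $\Q\otimes\mathcal M_k$, which is not a field. It may be cleaner to avoid partial fractions altogether. The alternative is to decompose directly: $Z(T)=\sum_{i=0}^{e-1}T^{i}\sum_k \theta_{ke+i}T^{ke}$, and show that $\sum_k(\theta_{ke+i}-d_i)T^{ke}$ is a rational function with no pole at $T=1$. Forey's proof in fact gives $\theta_{ke+i}-d_i$ as a finite combination of terms $k^j\LL^{-ak}$ with $a>0$, since it arises from a cell decomposition or resolution. Summing such terms yields denominators $(1-\LL^{-a}T^{e})^{j+1}$ with $a>0$, whose evaluation at $T=1$ is invertible in $\mathcal M_k$.

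With that decomposition, the residue is a direct computation. We have
\[
Z(T)=\sum_{i=0}^{e-1}\frac{d_iT^i}{1-T^e}+G(T),
\]
with $G$ regular at $1$, meaning its denominator involves only $a_i\neq0$. Hence $(1-T)G(T)$ evaluates to $0$ at $T=1$. Since $(1-T)/(1-T^e)=1/(1+T+\dots+T^{e-1})$ evaluates to $1/e$ at $T=1$, we get
\[
\mathrm{Res}_{T=1}Z=\frac1e\sum_{i=0}^{e-1} d_i=\Theta_d^{\mathrm{mot}}(D,x).
\]
It remains to check that $\mathrm{Res}_{T=1}$ as in Definition \ref{def: residue} is well defined on $S$, i.e. independent of the representative $P/Q$; this only requires that evaluation at $T=1$ respects the localization.
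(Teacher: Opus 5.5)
Your second route is essentially the paper's proof. The paper also writes $Z(T)=\sum_{i=0}^{e-1} d_i T^i/(1-T^e)+\sum_{i}\sum_k \varepsilon_{k,i}T^{ke+i}$ with $\varepsilon_{k,i}=\theta_{ke+i}-d_i$, uses Presburger cell decomposition together with the convergence of the subsequences to write $\varepsilon_{k,i}$ as a finite sum of terms $h_{j,i}k^{\alpha_j}\LL^{\beta_j k}$ with $\beta_j<0$, and concludes exactly as you do from the denominators $(1-\LL^{\beta_j}T^e)^{\alpha_j+1}$. The partial-fraction detour is unnecessary, and the step you flag as delicate — that terms with $\beta_j>0$, or with $\beta_j=0$ and $\alpha_j\geq 1$, must vanish — is simply asserted in the paper from convergence, just as you attribute it to Forey's argument.
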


\vspace{\baselineskip}

We use this theorem to prove an explicit formula for the motivic local density of isolated singularities of algebraic varieties of any dimension. The method is based on a resolution of singularities. The morphism of resolution allows us to compute the motivic local density using a theorem of change of variable (see Section \ref{section : mot int}, Theorem \ref{thm: change var}). The jacobian factor appearing in this change of variable can be expressed with the Mather discrepancies (see Section \ref{section: mather disc}). A resolution of $(X,0)$ is good if its exceptional locus is a normal crossing divisor. We also require that the resolution factors through the blowup of the maximal ideal and through the Nash modification of $(X,O)$. The main result of this paper is the following theorem:

\vspace{\baselineskip}

\begin{theorem}\label{thm: intro formula}
    Let $X$ be an algebraic variety of dimension $d$ over $\C$ with a unique isolated singularity $O$. Let $\pi : Y \longrightarrow X$ be a good resolution of the singularity, that satisfies the previous conditions. Set $E_i, i \in J$ the irreducible components of the exceptional locus, and $m_i$ its multiplicity. For $i \in J$ and $I \subset J$, we consider the schemes $E_i^0=E_i \smallsetminus \cup_{j \neq i} E_j, E_I=\cap_{i\in I} E_i$ and $E_I^0=E_I \smallsetminus \cup_{j \in J\smallsetminus I} E_j$.
    
    The motivic local density of $X$ at the point $O$ is given by:
    $$
    \Theta^\text{mot}_d(X,O) = \sum_{i\mid \alpha_i=0} \frac{1}{m_i} \sum_{\substack{I\subset J\\ I=\{i,\dots,i_s\}}} (\LL-1)^{|I|}\prod_{j \in I\setminus\{i\}}  \frac{\LL^{-\alpha_{j}}}{1-\LL^{-\alpha_{j}}}\frac{[E_I^0]}{\LL^d-1} ,
    $$
    which lies in $\mathcal{M}_\C \otimes \Q$, and where $\alpha_i=\hat{k}_i+1-dm_i$, with $\hat{k}_i$ the Mather discrepancy. 
\end{theorem}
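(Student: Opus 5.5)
We will deduce the formula from Theorem \ref{thm: intro zeta}, applied to the definable set $\underline{X}\subset h[n,0,0]$ at the point $O$ (taken as the origin of an embedding $X\subset \mathbb{A}^n_\C$; by Proposition \ref{prop : indep embed} the choice of embedding is irrelevant). Since $\mu_d(B_d(O,m))=\LL^{-md}$ up to a harmless normalisation, the zeta function is
$$
Z(T)=\sum_{m\ge 0}\LL^{md}\,\mu_d\big(\underline{X}\cap B_n(O,m)\big)T^m,
$$
and it suffices to compute each term through the resolution and then take $\mathrm{Res}_{T=1}$. The set $\underline{X}\cap B_n(O,m)$ consists of arcs $\gamma$ on $X$ with $\mathrm{ord}_t(\gamma^*\mathfrak m_O)\ge m$. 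Because $\pi$ factors through the blowup of $\mathfrak m_O$, the pullback $\pi^{-1}\mathfrak m_O\cdot\mathcal O_Y=\mathcal O_Y(-\sum_i m_iE_i)$ is invertible, so for an arc $\tilde\gamma$ on $Y$ (up to measure zero) $\mathrm{ord}(\pi\circ\tilde\gamma)=\sum_i m_i\,\mathrm{ord}_{E_i}(\tilde\gamma)$.

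Next we apply the change of variables formula (Theorem \ref{thm: change var}). Because $\pi$ factors through the Nash modification, the Jacobian factor for the $d$-dimensional measure on $X$ is $\LL^{-\mathrm{ord}\,\mathrm{Jac}_\pi}$, with $\mathrm{Jac}_\pi$ the ideal $\mathcal O_Y(-\sum_i\hat k_iE_i)$ given by the Mather discrepancies (Section \ref{section: mather disc}). We then stratify the arc space of $Y$ by the strata $E_I^0$ and by the orders $n_j=\mathrm{ord}_{E_j}\ge 1$, $j\in I$, in the usual Denef--Loeser way. The cylinder with fixed $(n_j)_{j\in I}$ has measure $[E_I^0](\LL-1)^{|I|}\LL^{-d}\LL^{-\sum_j n_j}$, which gives
$$
\mu_d\big(\underline{X}\cap B_n(O,m)\big)=\sum_{\emptyset\neq I\subset J}[E_I^0](\LL-1)^{|I|}\LL^{-d}\!\!\sum_{\substack{n_j\ge1\\ \sum m_jn_j\ge m}}\!\!\LL^{-\sum_j(\hat k_j+1)n_j}.
$$
The arcs not meeting the exceptional locus do not pass through $O$ and contribute nothing for $m\ge1$.

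Multiplying by $\LL^{md}T^m$ and summing over $m$, we exchange the sums. For a fixed tuple $(n_j)$ with $N=\sum_j m_jn_j$, we have $\sum_{m=0}^{N}(\LL^dT)^m=\frac{1-(\LL^dT)^{N+1}}{1-\LL^dT}$. The first piece is a rational function regular at $T=1$ once summed, since $\hat k_j+1\ge 1$ makes the geometric series converge. Hence the part with a pole at $T=1$ is
$$
-\frac{\LL^d T}{1-\LL^dT}\sum_{I}[E_I^0](\LL-1)^{|I|}\LL^{-d}\prod_{j\in I}\frac{\LL^{-\alpha_j}T^{m_j}}{1-\LL^{-\alpha_j}T^{m_j}},\qquad \alpha_j=\hat k_j+1-dm_j.
$$
The factor $1/(1-\LL^dT)$ is regular at $T=1$. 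The factors $1-\LL^{-\alpha_j}T^{m_j}$ vanish at $T=1$ exactly when $\alpha_j=0$.

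The main obstacle is the residue computation, and we must show that $Z(T)\in S$, i.e.\ that at most one $\alpha_j$ with $j\in I$ vanishes. The plan is to show $\alpha_j\ge 0$ for all $j$, which amounts to the Lelong-type bound $\hat k_j+1\ge d\,m_j$ (equivalently, that the volumes $\theta_m$ stay bounded). We also need that two components with $\alpha=0$ never meet, so that the pole at $T=1$ is simple; this is consistent with Theorem \ref{thm: intro zeta}, which already guarantees $Z\in S$. We would get both facts from the factorisation through the blowup and the Nash modification: the components with $\alpha_i=0$ should be exactly the strict transforms of the exceptional divisor of the blowup, and these are disjoint after a good resolution.

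Granting this, take $I$ containing a unique $i$ with $\alpha_i=0$. Then $\mathrm{Res}_{T=1}\frac{T^{m_i}}{1-T^{m_i}}=\frac1{m_i}$. Evaluating the remaining factors at $T=1$ gives $\prod_{j\ne i}\frac{\LL^{-\alpha_j}}{1-\LL^{-\alpha_j}}$. The prefactor becomes $-\frac{\LL^d}{1-\LL^d}\LL^{-d}=\frac1{\LL^d-1}$. This yields exactly the stated sum, which lies in $\mathcal M_\C\otimes\Q$.
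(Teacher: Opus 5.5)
The computation you do carry out is correct and follows the paper's strategy: change of variables along $\pi$ with the Jacobian factor given by the Mather discrepancies, stratification of arcs on $Y$ by $E_I^0$ and contact orders, a rational zeta function, then the residue at $T=1$. Using balls instead of spheres only adds the factor $-\LL^dT/(1-\LL^dT)$, up to terms regular at $T=1$. Its value $\LL^d/(\LL^d-1)$ at $T=1$ matches the paper's normalisation by $\mu_d(S_d(O,n))$.

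The gap is the step you flag and then simply grant: that no nonempty $E_I^0$ has two indices with $\alpha=0$. This is part of the statement, not a side issue. The formula contains $1/(1-\LL^{-\alpha_j})$ for $j\in I\setminus\{i\}$. A stratum with two such indices would give a double pole at $T=1$, and then the residue is no longer the stated expression. Knowing only that the total $Z$ lies in $S$ does not exclude this, unless you also show that double-pole contributions from different strata cannot cancel.

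The geometric justification you sketch does not hold up.

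\textbf{Identification.} That the $\alpha=0$ components are exactly the strict transforms of the components of the exceptional divisor of $\mathrm{Bl}_{\mathfrak m_O}X$ is only asserted.

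\textbf{Disjointness.} Disjointness of those strict transforms is \emph{not} a consequence of having a good resolution. For the $A_2$ singularity $xy=z^3$, the blowup of the maximal ideal is already smooth with SNC exceptional divisor: two $\mathcal L$-curves with $m=1$ meeting at a point. They are separated only because the Nash condition forces one more blowup. In the chart $z=1$, with $z=x'y'$, the image of the pullback of $\Omega^2_X$ is $x'y'(x',y')\,dx'\wedge dy'$, which is not invertible at the origin. After that blowup, $E_1,E_2$ still have $\hat k=1$ and $\alpha=0$, while the new component has $m=2$, $\hat k=4$, $\alpha=1$. So the Nash factorisation must enter essentially, and your plan gives no mechanism for it.

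The paper does not argue geometrically here. It uses that the positive constructible sequences $a_n^\Lambda$ admit a mean value at infinity (via Proposition-Definition 3.6 of \cite{forey_motivic_2017}). It then applies Theorem~\ref{thm: zeta local volumes}, which places each $Z_\Lambda$ in $S$ and hence forbids two vanishing $\alpha$'s in $\Lambda$. That is the ingredient you need to supply.

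Separately, $\alpha_j\ge 0$ is not needed for the residue computation, since $1-\LL^{-\alpha_j}$ is invertible in $\mathcal M_\C$ for every $\alpha_j\neq 0$.
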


\vspace{\baselineskip}

This result is a generalization of the formula for surfaces proved in \cite{ratajczak_motivic_2026}. In this case, data are used: the inner rate related to the bilipschitz geometry of the singularity. Those inner rates were first introduced in \cite{birbrair_thick-thin_2014} by L. Birbrair, W. D. Neumann and A. Pichon, and were studied in \cite{silva_inner_2022} by A. Belotto da Silva, L. Fantini and A. Pichon. They study the metric structure of the germ of an isolated complex surface singularity $(X,O)$ by means of the infinite family of numerical analytic invariants, called inner rates. In this context, we also consider $l:(X,O) \longrightarrow (\C^2,0)$ a generic projection, and a good resolution of the singularity such that $\Pi_l^* \cap E_j = \emptyset$ for components $E_j$ with associated inner rate equal to 1, where $\Pi_l^*$ denotes the strict transform of the polar curve $\Pi_l$.

\vspace{\baselineskip}

\begin{theorem}\label{thm : formula surface}
    Let $X$ be an algebraic surface (i.e. a 2-dimensional separated scheme of finite type) over $\C$ with a unique isolated singularity $O$. Let $\pi : Y \longrightarrow X$ be a good resolution of the singularity that satisfies the previous conditions. Denote $E_i$ the irreducible components of the exceptional locus, $m_i$ its multiplicity and $q_i$ its associated inner rate. Set $E_i^0=E_i \smallsetminus \cup_{j \neq i} E_j$.
    The motivic local density of $X$ at the point $O$ is given by:
    $$
    \Theta^\text{mot}_2(X,O) = \sum_{i\mid q_i=1} \frac{1}{m_i} \Bigg(\frac{1}{\LL+1}[E_i^0] + \sum_{j \mid E_i\cap E_j \neq \emptyset }\frac{\LL^{-(q_{j}-1)m_{j}}(1-\LL^{-1})}{(1-\LL^{-(q_{j}-1)m_{j}})(1+\LL^{-1})} \Bigg),
    $$
    which lies in $\mathcal{M}_\C \otimes \Q$.
\end{theorem}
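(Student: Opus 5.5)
The plan is to obtain this formula by specializing Theorem \ref{thm: intro formula} to $d=2$ and translating the Mather discrepancies into inner rates. The whole argument reduces to one identity,
$$
\alpha_j=\hat{k}_j+1-2m_j=(q_j-1)m_j \qquad \text{for every } j\in J,
$$
that is, $\hat{k}_j+1=m_j(q_j+1)$. Once this holds, $\alpha_i=0$ is equivalent to $q_i=1$, so the outer sums of the two formulas run over the same components. The term $\LL^{-\alpha_j}/(1-\LL^{-\alpha_j})$ then becomes $\LL^{-(q_j-1)m_j}/(1-\LL^{-(q_j-1)m_j})$.

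To prove the identity I would use a generic projection $l:(X,O)\to(\C^2,0)$ and compare two quantities.
\begin{enumerate}[label=(\arabic*)]
\item Since $\pi$ factors through the Nash modification, the Mather discrepancy $\hat{k}_j$ equals the order of vanishing along $E_j$ of the Jacobian of $l\circ\pi$. This is the order of the pullback of the generator of $\Omega^2_{\C^2}$, compared with the image of $\pi^*\Omega^2_X$ in $\Omega^2_Y$. The point is that, for $l$ generic, the 2-form $l^*(dx\wedge dy)$ generates the Nash sheaf $\sheaf$ away from the polar curve.
\item The hypothesis $\Pi_l^*\cap E_j=\emptyset$ lets us read this order from the Belotto--Fantini--Pichon description of inner rates. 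In \cite{silva_inner_2022}, the order of the Jacobian of $l\circ\pi$ along $E_j$ is $m_j(q_j+1)-1$. Informally, $E_j$ sits at distance of order $t^{m_j}$ from $O$, and the map $l$ contracts it with inner rate $q_j$, which gives the exponent $m_jq_j+m_j-1$.
\end{enumerate}
I expect this step to be the main obstacle. One has to check that the BFP formula, stated via the Laplacian of the inner rate function or via the polar and discriminant curves, really computes the Mather discrepancy. One also has to check that the genericity and polar-curve hypotheses are exactly what is needed; in particular, that our resolution factors through the Nash modification and the blowup of the maximal ideal, possibly after extra blowups that do not change the formula. If the direct identification is delicate, I would first verify it on a curvette argument: compute both sides on an arc through a general point of $E_j$.

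With the identity established, the rest is bookkeeping in Theorem \ref{thm: intro formula} with $d=2$. Because the exceptional locus is a normal crossing divisor on a surface, $E_I\neq\emptyset$ only for $|I|\le 2$.
\begin{itemize}
\item For $I=\{i\}$ the summand is $(\LL-1)[E_i^0]/(\LL^2-1)=[E_i^0]/(\LL+1)$.
\item For $I=\{i,j\}$ the set $E_I^0=E_i\cap E_j$ is a finite set of points. Using $(\LL-1)^2/(\LL^2-1)=(1-\LL^{-1})/(1+\LL^{-1})$, each intersection point contributes $\frac{\LL^{-(q_j-1)m_j}(1-\LL^{-1})}{(1-\LL^{-(q_j-1)m_j})(1+\LL^{-1})}$.
\end{itemize}
Summing over neighbours $j$ of $E_i$ gives the stated expression. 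In the good resolution used here, neighbours meet in a single point; otherwise the term is weighted by $\#(E_i\cap E_j)$.

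One must also make sure that these neighbour terms are well defined, that is, $\alpha_j\neq0$ whenever $E_j$ meets a component with $q_i=1$. This is implicit in Theorem \ref{thm: intro formula}, since $Z(T)$ has a pole of order at most $1$ at $T=1$. I would justify it by showing that, after the blowups of the maximal ideal, components with $q=1$ are pairwise disjoint, and that their neighbours have $q_j>1$.
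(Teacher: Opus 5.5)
Your proposal follows the same route as the paper. The paper also specializes Theorem~\ref{thm:formula} to $d=2$, translates via $q_j=\frac{\hat k_j+1}{m_j}-1$ (equivalently $\alpha_j=(q_j-1)m_j$, which it simply cites as Proposition 4.8 of \cite{ratajczak_motivic_2026} rather than re-deriving from a generic projection as you sketch), and observes that only $I=\{i\}$ and $I=\{i,j\}$ with $E_i\cap E_j\neq\emptyset$ contribute.

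One small point: you need this identity for every component, including neighbours $E_j$ with $q_j>1$, for which no polar-curve hypothesis is assumed. So step (2) should not rely on $\Pi_l^*\cap E_j=\emptyset$. It does not need to, since $\Pi_l^*$ meets $E_j$ in at most finitely many points and does not change the order of the Jacobian of $l\circ\pi$ along $E_j$.
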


\vspace{\baselineskip}

The paper is organized as follows. In the next section, we will recall some basic definitions on motivic integration as well as properties of the motivic local density. The Section \ref{section: zeta function} is devoded to the proof of Theorem \ref{thm: intro zeta}. We then provide the definition of the Mather discrepancy divisor in Section \ref{section: mather disc}. We prove Theorem \ref{thm: intro formula} in Section \ref{section: proof thm} and we also explain how to recover the formula for surfaces using our new formula. The last section of this paper is devoded to the definition of a motivic Zeta function associated to a sheaf of ideals and a singular algebraic variety over a field of characteristic zero. From this definition, one can recover the motivic Zeta functions already studied in various contexts that we will recall, and also the Zeta function of normalized local volumes created to compute the motivic local density.

\vspace{\baselineskip}

\textbf{Acknowledgements:} I acknowledge the support of the CDP C2EMPI, together with the French State under the France-2030 programme, the University of Lille, the Initiative of Excellence of the University of Lille, the European Metropolis of Lille for their funding and support of the R-CDP-24-004-C2EMPI project. I gratefully acknowledge R. Cluckers and A. Forey for their guidance, and L. Fantini and A. Pichon for enlightening discussions. 

\section{Recap on motivic integration and local density} \label{section : mot int}

We are considering the theory of Cluckers-Loeser (See \cite{cluckers_constructible_2008}), specialized to the case of algebraically closed residue field. Let us recall the setting. Fix $k$ a field of characteristic zero. Consider the three sorted language of Denef-Pas, with one sort for the valued field, with the language of rings $\{0,1, +,-, \cdot\}$, one sort for the residue field with the language of rings and one sort for the value group with the Presburger language $\{0,+,\leq,\{\equiv_n\}_{n \geq 1} \}$ where $\equiv_n$ is a binary relation for congruence modulo $n$. We also add a symbol $v$ for the valuation map, $\ac$ for the angular component, and constants symbols for elements of $k(\!(t)\!)$. We will use this language to describe the Henselian discretely valued field of characteristic zero. We will consider structures for $\mathcal{L}_{\text{DP}}$ consisting of tuples $(K,k,\Gamma)$ where $K$ is a
valued field with value group $\Gamma$, and residue field $k$. For example, for all field $k$, $k(\!(t)\!)$  the field of formal Laurent series over $k$ is an $\mathcal{L}-$structure $(k(\!(t)\!),k,\Z)$, where the symbols are naturally interpreted: 
$$
v\left(\sum_{i \geq i_0}^\infty  a_it^i \right) = i_0, ~ \ac \left( \sum_{i \geq i_0}^\infty  a_it^i \right) = a_{i_0} \text{ if } a_{i_0} \neq 0.
$$

The $\mathcal{L}_\text{DP}$-theory of the above discribed structures whose valued field is Henselian and whose residue field is of characteristic zero admits elimination of quantifiers in the valued field sort. 

\vspace{\baselineskip}

\begin{definition}(See Section 2.3 of \cite{cluckers_constructible_2008})
    We define $h[m,n,r]$ the functor from the category of algebraically closed fields containing $k$ to the category of sets sending an algebraically closed field $F$ to 
    $$
    h[m,n,r](F)=F(\!(t)\!)^m \times F^n \times \Z^r.
    $$
    For $X$ an affine variety over $F$ in $\mathbb{A}^n_F$, denote $\underline{X}$ the subdefinable set of $h[n,0,0]$, which to any algebraically closed field $F$ associates $X(F[\![t]\!])$.
\end{definition}

\vspace{\baselineskip}

In this paper, definable means definable in the Denef-Pas language. \\

\begin{definition}
    If $D$ is a subassignment of $h[m,n,r]$, we define $\text{dim}(D)$ as the dimension of the image of $D$ under the projection $h[m,n,r] \longrightarrow h[m,0,0]$.
\end{definition}

\vspace{\baselineskip}

\begin{definition}
    Let $k$ be a field, the Grothendieck group of varieties $K_0(\text{Var}_k)$ is the free abelian group generated by symbols $[X]$ for the isomorphism class of $X$ a variety (a separated scheme of finite type over $k$), with relations  $[X] = [Y]+[X \smallsetminus Y]$, where $Y$ is a closed subvariety. The fiber product induces a ring structure on $K_0(\text{Var}_k)$ with identity element the class of a point. Set $\LL:=[\mathbb{A}_k^1]$ the class of the affine line.
\end{definition} 



\vspace{\baselineskip}

We consider the localization
$$
\mathcal{M}_k := K_0(\text{Var}_k)\left[ \LL^{-1}, \left\{ \frac{1}{1-\LL^{-i}} \right\}_{i >0} \right].
$$
The topology on $\mathcal{M}_k$ is induced by the $\LL-$degree. \\

In \cite{cluckers_constructible_2008}, Cluckers and Loeser assign to some definable subset $D$ of dimension $d$ an element of $\mathcal{M}_k$, called the motivic volume of $D$, that we denote by $\mu_d(D)$. They define more generally a whole class of motivic constructible functions that are stable under integration and with good properties like Fubini’s theorem and change of variable formula. 

\vspace{\baselineskip}

Let $h$ be a definable subassignment of $h[m,n,r]$ of dimension $d$. We denote by $x_1,\dots,x_m$ the coordinates on $\mathbb{A}^m_{k}$ and we consider the $d$-forms $\omega_I := dx_{i_1} \wedge \dots \wedge dx_{i_d}$ for $I=\{i_1,\dots,i_d\} \subset \{1,\dots,m\}$, $i_1<\dots<i_d$. We denote by $|\omega_I|_h$ the image of $\omega_I$ in $|\tilde{\Omega}|_+(h)$. We recall the following definitions from \cite{cluckers_constructible_2008}.

\vspace{\baselineskip}

\begin{definition}[8.3.1 of \cite{cluckers_constructible_2008}]
    There is a unique element $|\omega_0|_h$ in $|\tilde{\Omega}|_+(h)$, the canonical volume form, such that, for every $I$, there exist $\Z$-valued definable functions $\alpha_I$ and $\beta_I$ on $h$, with $\beta_I$ only taking as values 1 and 0, such that $\alpha_I + \beta_I >0$ on $h$, $|\omega_I|_h=\beta_I\LL^{-\alpha_I}|\omega_0|_h$ in $|\tilde{\Omega}|_+(h)$, and such that $\text{inf}_I\alpha_I=0$. We call $|\omega_0|_h$ the canonical volume form on $h$.\label{def : volume form embed}
\end{definition}

\vspace{\baselineskip}

\begin{theorem}(Theorem 12.1.1 of \cite{cluckers_constructible_2008}, or Lemma 3.3 of \cite{denef_germs_1999})\label{thm: change var}
    Let $h : Y \longrightarrow X$ a proper birational morphism between two algebraic varieties over $k$ of pure dimension $d$, consider $A$ a definable set of $\underline{X}$, $\alpha : A \longrightarrow \N$ a simple function. Then we have the following equality:
    $$
    \int_{A} \LL^{-\alpha}|\omega_X| = \int_{h^{-1}(A)} \LL^{-\alpha \circ h - v(h^*(\Omega_X^d))}|\omega_Y|,
    $$
    where $|\omega_X|,|\omega_Y|$ are the canonical volume forms associated to $\underline{X}$ and $\underline{Y}$.
\end{theorem}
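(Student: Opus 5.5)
The statement is quoted from the literature, so I would reduce it to the Denef--Loeser change of variables formula for arc spaces. I would then use Definition \ref{def : volume form embed} to pass between the canonical volume form of the possibly singular $\underline{X}$ and the standard forms $\omega_I$.

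\emph{Preliminary reductions.} Since $h$ is proper and birational, there is a proper closed subset $F\subset X$ such that $h$ is an isomorphism over $X\smallsetminus F$. Arcs lying entirely in $F$, or in $h^{-1}(F)$, form a set of dimension $<d$ and hence have measure zero for $\mu_d$. Every other arc $\gamma$ of $\underline{X}$ has generic point outside $F$. By the valuative criterion of properness, $\gamma$ lifts uniquely to an arc of $\underline{Y}$. So $h$ induces a definable bijection between $h^{-1}(A)$ and $A$ up to null sets. The remaining work is to compare the two volume forms along this bijection.

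\emph{Decomposition.} Cover $X$ by affine charts embedded in $\mathbb{A}^m$. Take the forms $\omega_I=dx_{i_1}\wedge\dots\wedge dx_{i_d}$ and write $h^*\omega_I=g_I\,\omega_Y$ locally on $Y$, with $\omega_Y$ a local generator of $\Omega^d_Y$; here I assume $Y$ smooth, as in our application. Then $v(h^*\Omega^d_X)(y)=\min_I \mathrm{ord}\, g_I(y)$. By Definition \ref{def : volume form embed}, $|\omega_X|=\LL^{\min_I \mathrm{ord}\,\omega_I}$-normalized, in the sense that on the definable piece where a given $I_0$ realizes the minimum, $|\omega_X|=|\omega_{I_0}|$. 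Partition $A$ into finitely many definable pieces on which:
\begin{itemize}
\item the index $I_0$ realizing the minimum is fixed;
\item $\alpha$ takes a fixed value;
\item $e:=\mathrm{ord}\,g_{I_0}\circ h^{-1}$ takes a fixed value.
\end{itemize}
By additivity of the integral, it suffices to prove the formula on each piece.

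\emph{Key step and main obstacle.} On such a piece, compose with the linear projection $p_{I_0}:X\to\mathbb{A}^d$. By the choice of $I_0$, the volume $\int\LL^{-\alpha}|\omega_X|$ equals the volume of the image arcs under $p_{I_0}$, counted with the fibre cardinality. The map $p_{I_0}\circ h:Y\to\mathbb{A}^d$ is a morphism between smooth varieties with Jacobian order $e$. Denef--Loeser's fibration lemma (Lemma 3.3 of \cite{denef_germs_1999}) then applies: for $n\gg e$, the map on $n$-jets is a piecewise trivial fibration with fibre $\mathbb{A}^e$ over its image. This gives $\mu_d(\text{piece of }A)=\LL^{-e}\mu_d(\text{corresponding piece of }h^{-1}(A))$ with respect to $|\omega_Y|$.

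I expect the delicate point to be verifying the fibration lemma's hypotheses uniformly on a definable piece. This means checking injectivity of $h$ on arcs and the fibre structure on jets, which needs $n\ge 2e$ together with Hensel's lemma. I would follow the original jet-level argument, using that pieces defined by order conditions are cylinders up to null sets. Finally, combine over the pieces and use that the Cluckers--Loeser measure on $h[m,0,0]$ agrees with the Denef--Loeser arc measure on cylinders. This yields
$$
\int_A\LL^{-\alpha}|\omega_X|=\int_{h^{-1}(A)}\LL^{-\alpha\circ h-v(h^*\Omega^d_X)}|\omega_Y|.
$$
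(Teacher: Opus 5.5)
The paper gives no proof of this statement; it is quoted from Cluckers--Loeser and Denef--Loeser. Your reduction to arcs outside a null set, the unique lifting through $h$, and the identity $v(h^*\Omega^d_X)=\min_I\mathrm{ord}\,g_I$ are fine. The key step, however, has a genuine gap.

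The map $p_{I_0}\circ h\colon Y\to\mathbb{A}^d$ is generically finite, not birational, so it is not injective on arcs. The Denef--Loeser lemma therefore does not apply to it. That lemma concludes that the $n$-jet map is a piecewise trivial fibration with fibre $\mathbb{A}^e$, and this rests on arcs of $\Delta_e$ with the same image being congruent modulo $t^{n-e+1}$; that is exactly where injectivity of a birational map on arcs is used. Both of your claims in this step fail, as two examples show.

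\emph{The node.} Take $y^2=x^2(x+1)$, normalized by $s\mapsto(s^2-1,s^3-s)$. Let $P$ be the arcs centred at the node and $\tilde P=h^{-1}(P)$. Then $|\omega_X|=|dx|$ and $e=0$ on $P$. But the jets centred at $s=1$ and those centred at $s=-1$ map onto the same jets of $x$. Hence $\mu_1(\tilde P)=2\LL^{-1}=2\,\mu_1(p_x(P))$, whereas a fibration with fibre $\mathbb{A}^0$ would give $\mu_1(p_x(P))$. So the fibration claim fails here.

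\emph{The cusp.} Take $y^2=x^3$, normalized by $s\mapsto(s^2,s^3)$. Again $|\omega_X|=|dx|$, and $p_x$ is two-to-one on non-zero arcs, yet
\[
\int_{\underline{X}}|dx|=\sum_{n\ge0}(\LL-1)\LL^{-n-1}\LL^{-n}=\frac{1}{1+\LL^{-1}}=\mu_1(p_x(\underline{X})),
\]
with no factor $2$. So ``counted with the fibre cardinality'' is false in $\mathcal{M}_k$.

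The correct weight is the class of a finite cover, which is trivial at the node and non-trivial at the cusp. At the cusp the $n$-jet fibres consist of two copies of $\mathbb{A}^e$, yet $[\pi_n(\Delta_e)]=\LL^e[\text{image}]$. Nothing in your sketch controls this weight. Nor can you refine the pieces until $p_{I_0}$ is injective without adding residue-field parameters, since there is no definable choice of square root.

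The remedy is to work only with the birational map $h$, which is a definable bijection $h^{-1}(A)\smallsetminus N\to A\smallsetminus N'$ with $N,N'$ of dimension $<d$. There are two ways to do this.
\begin{enumerate}
\item Apply Denef--Loeser's lemma to $h$ itself, from jets of $Y$ to jets of $X$. Then compare the Denef--Loeser measure on arcs of the singular $X$ with $\int|\omega_X|$. This comparison is a genuine theorem, not merely agreement on cylinders of smooth spaces.
\item Stay in Cluckers--Loeser's framework and apply their change of variables for definable isomorphisms. The Jacobian then follows from your own data. Write $h^*|\omega_X|=\LL^{-c}|\omega_Y|$. The relations $h^*|\omega_I|_X=\LL^{-\mathrm{ord}\,g_I}|\omega_Y|$ and $|\omega_I|_X=\beta_I\LL^{-\alpha_I}|\omega_X|$ give $\mathrm{ord}\,g_I=\alpha_I\circ h+c$ wherever $\beta_I=1$. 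Taking the minimum over $I$, and using $\inf_I\alpha_I=0$, yields $c=v(h^*\Omega^d_X)$.
\end{enumerate}

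Finally, you only treat the case $Y$ smooth. That matches Denef--Loeser and the use in Theorem \ref{thm:formula}, but the statement as written does not assume it.
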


\vspace{\baselineskip}

\begin{remark}
    When $X$ is a smooth $k-$variety of pure dimension $d$, then $\mu_d(\underline{X})=[X]\LL^{-d}$. 
\end{remark}

\vspace{\baselineskip}

We briefly recall the definition of the motivic local density, and some of its properties. The local density had been studied in the case of the Laurent series field $\field$, and more generally in the case of discrete valued Henselian fields of characteristic zero by A. Forey in \cite{forey_motivic_2017}. 

\vspace{\baselineskip}

\begin{definition}
    Say that a function $h : \mathbb{N} \longrightarrow \mathcal{M}_k$ has a mean value at infinity if there exists an integer $e > 0$ such that 
    $$
    \lim\limits_{\overset{n \longrightarrow \infty}{n \equiv c \text{ mod } e}} h(n)
    $$
    exists in $\R$ for each $c=0,\dots,e-1$ and in this case define the mean value at infinity of $h$ as the average 
    $$
    \text{MV}_\infty(h):=\frac{1}{e} \sum_{i=0}^{e-1} \lim\limits_{\overset{n \longrightarrow \infty}{n \equiv c \text{ mod } e}} h(n).
    $$
    The mean value is independent of the choice of the modulus $e>0$. We recall that the topology on $\mathcal{M}_k$ is induced by the $\LL-$degree.
\end{definition}

\vspace{\baselineskip}

Consider $D \in h[m,0,0]$ a definable set of dimension $d$. Consider $x$ a point of $D$ and set $\theta_d(D,x)(n):= \frac{\mu_d(D \cap B_m(x,n))}{\mu_d(B_d(x,n))} \in \mathcal{M}_k$. 

\vspace{\baselineskip}

\begin{definition}\label{def:local density}
    The function $\theta_d(D,x)$ admits a mean value at infinity, and we define the local density of the set $D$ at the point $x$ by: 
    $$
    \Theta_d(D,x):= \text{MV}_\infty(\theta_d(D,x)) \in \Q \otimes \mathcal{M}_k.
    $$
    (See Proposition-Definition 3.6 of \cite{forey_motivic_2017} for more details.)
\end{definition}

\vspace{\baselineskip}

With the properties of the motivic measure, we trivially have the following proposition: 

\vspace{\baselineskip}

\begin{proposition}
    Consider $D \subset h[m,0,0]$ a definable set of dimension $d$, $U \subset D$ a subset of $D$ of dimension $<d$ and $x$ a point of $D$. Hence
    $$
    \Theta_d(D,x) = \Theta_d(D \smallsetminus U,x).
    $$
\end{proposition}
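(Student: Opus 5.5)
The statement to prove is the last proposition: removing a subset $U\subset D$ of dimension $<d$ does not change $\Theta_d(D,x)$. My plan is to show that the normalized local volumes already coincide for every $n$, so the mean values at infinity agree trivially.

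\textbf{Step 1: split the measure.} For each $n$, the set $D\cap B_m(x,n)$ is the disjoint union of $(D\smallsetminus U)\cap B_m(x,n)$ and $U\cap B_m(x,n)$. Both pieces are definable subassignments of $h[m,0,0]$, since they are intersections of definable sets with a ball. By additivity of the $d$-dimensional motivic measure of \cite{cluckers_constructible_2008}, we get
$$\mu_d(D\cap B_m(x,n))=\mu_d((D\smallsetminus U)\cap B_m(x,n))+\mu_d(U\cap B_m(x,n)).$$
Here $\mu_d$ on a set of dimension $\le d$ is defined by integrating the constant function $1$ against the canonical $d$-dimensional volume form of Definition \ref{def : volume form embed}. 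Additivity under finite disjoint definable unions is part of the construction.

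\textbf{Step 2: the small piece has measure zero.} The set $U\cap B_m(x,n)$ has dimension $<d$. In the Cluckers--Loeser theory, the $d$-dimensional measure $\mu_d$ only sees the dimension-$d$ part: a set of dimension strictly less than $d$ is $\mu_d$-negligible. Concretely, in the construction of the $d$-dimensional integral, everything is reduced modulo functions supported in dimension $<d$, so $\mu_d(U\cap B_m(x,n))=0$. This is the only point requiring care. I would justify it by citing the definition of $\mu_d$ in \cite{cluckers_constructible_2008}, where $C^d$ is taken modulo $C^{<d}$, rather than reproving it.

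\textbf{Step 3: conclude.} From Steps 1 and 2,
$$\theta_d(D,x)(n)=\theta_d(D\smallsetminus U,x)(n)\quad\text{for all } n.$$
If $x\in U$, the point need not lie in $D\smallsetminus U$. However, the definition of $\theta_d$ only uses the ball centred at $x$, so it still makes sense. Both functions therefore admit the same mean value at infinity by Definition \ref{def:local density}, which gives $\Theta_d(D,x)=\Theta_d(D\smallsetminus U,x)$.

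The main obstacle, mild as it is, is Step 2: making precise that $\mu_d$ annihilates definable sets of dimension $<d$. This is a feature of how $\mu_d$ is built, not something one computes.
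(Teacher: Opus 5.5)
Your proposal is correct and is essentially the argument the paper intends: the paper gives no written proof and only says the statement follows trivially from the properties of the motivic measure. Those properties are exactly the additivity of $\mu_d$ and its vanishing on definable sets of dimension $<d$, which you use to get $\theta_d(D,x)(n)=\theta_d(D\smallsetminus U,x)(n)$ for every $n$ (with $U$ tacitly assumed definable, as you do).
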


\vspace{\baselineskip}

The following proposition shows that renormalizing with balls instead of with spheres yields the same local density. 

\vspace{\baselineskip}

\begin{proposition} \label{prop:normalization balls or spheres}
    Let $D \subset h[m,0,0]$ be a definable set of dimension $d$ and $x$ be a point of $D$. We have the following equality:
    \begin{align*}
    \Theta_d(D,x) & = \text{MV}_\infty\Big(n\mapsto \frac{\mu_d(D\cap B_m(x,n))}{\mu_d(B_d(x,n))}\Big) \\
    &= \text{MV}_\infty\Big(n\mapsto \frac{\mu_d(D\cap S_m(x,n))}{\mu_d(S_d(x,n))}\Big).
    \end{align*}
\end{proposition}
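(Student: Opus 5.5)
The plan is to compare the two normalized sequences directly, using the decomposition of a ball into its concentric spheres. Write $b(n):=\mu_d(D\cap B_m(x,n))$ and $s(n):=\mu_d(D\cap S_m(x,n))$, where $S_m(x,n)=\{y\mid v(y-x)=n\}$. Since $B_m(x,n)$ is the disjoint union of the spheres $S_m(x,k)$, $k\geq n$, together with the point $\{x\}$ (of dimension $0<d$, hence negligible), additivity and $\sigma$-additivity of $\mu_d$ give
$$b(n)=\sum_{k\geq n}s(k),$$
with convergence in the $\LL$-degree topology. For the model spaces we have $\mu_d(B_d(x,n))=\LL^{-nd}$ and $\mu_d(S_d(x,n))=\LL^{-nd}(1-\LL^{-d})$. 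So the sphere-normalized sequence is
$$\sigma(n):=\frac{s(n)}{\LL^{-nd}(1-\LL^{-d})},$$
while the ball-normalized one is $\theta(n)=b(n)\LL^{nd}$.

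First, express $s$ in terms of $b$. From the decomposition, $s(n)=b(n)-b(n+1)$, hence
$$\sigma(n)=\frac{\theta(n)-\LL^{-d}\theta(n+1)}{1-\LL^{-d}}.$$
By Definition \ref{def:local density}, $\theta$ has a mean value at infinity: there is $e>0$ such that $\theta(ke+c)\to d_c$ for each residue class $c$. Then along $n\equiv c \bmod e$ the sequence $\sigma(n)$ converges to
$$\frac{d_c-\LL^{-d}d_{c+1}}{1-\LL^{-d}},$$
with indices taken mod $e$. Here we use that limits in $\mathcal{M}_k$ are compatible with sums and with multiplication by fixed elements, which holds because the topology comes from the degree filtration.

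Averaging over $c$, the sum $\sum_c d_{c+1}$ equals $\sum_c d_c$. So the mean value of $\sigma$ is
$$\frac{1-\LL^{-d}}{1-\LL^{-d}}\cdot\frac{1}{e}\sum_c d_c=\Theta_d(D,x).$$
This gives the second equality. The first equality is just Definition \ref{def:local density}.

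The only delicate point is justifying the identity $s(n)=b(n)-b(n+1)$. This uses that $D\cap B_m(x,n)$ is the disjoint union of the definable sets $D\cap S_m(x,n)$ and $D\cap B_m(x,n+1)$, both of dimension $\leq d$, together with additivity of $\mu_d$ on such sets. That additivity is part of the Cluckers--Loeser formalism, so no infinite sum is actually needed.

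One also has to handle the degenerate case where one of these pieces has dimension $<d$, in which case its $\mu_d$ is zero. This is harmless for the argument.
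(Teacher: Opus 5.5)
Your proof is correct. There is nothing in the paper to compare it with: the proposition is stated without proof and treated as a known fact. Your argument supplies a clean one, and it matches the normalization the paper uses later, namely $\mu_d(S_d(x,n))=\LL^{-nd}(1-\LL^{-d})$, which is the factor $\frac{\LL^{dn}}{1-\LL^{-d}}$ in the proof of Theorem~\ref{thm:formula}.

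The steps that carry the proof are the following:
\begin{itemize}
\item the finite decomposition $D\cap B_m(x,n)=(D\cap S_m(x,n))\sqcup(D\cap B_m(x,n+1))$;
\item the resulting identity $\sigma(n)=\frac{\theta(n)-\LL^{-d}\theta(n+1)}{1-\LL^{-d}}$;
\item the observation that the shift $n\mapsto n+1$ only permutes the residue classes mod $e$, so averaging turns $\sum_c d_{c+1}$ back into $\sum_c d_c$.
\end{itemize}

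The natural alternative runs in the opposite direction. One writes $\theta(n)=(1-\LL^{-d})\sum_{j\ge0}\LL^{-jd}\sigma(n+j)$ and passes to the limit termwise. That route needs an infinite summation of sphere volumes and an interchange of limit and series in $\mathcal{M}_k$. Your route uses only finite additivity of $\mu_d$. It exploits the fact that the ball-normalized mean value is already known to exist by Definition~\ref{def:local density}. In exchange, it does not by itself show that existence of the sphere mean value implies existence of the ball one, but the statement does not require this. Your opening appeal to $\sigma$-additivity is therefore superfluous and can be deleted, as you already note.

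Two small tightenings are worth making.
\begin{itemize}
\item Your claim that limits are compatible with sums and scalar multiplication can be made airtight without discussing the topology of $\mathcal{M}_k$. (Before completion, the filtration topology need not be Hausdorff.) Run the computation on the explicit Presburger form $\theta(ke+c)=\sum_j h_{j,c}\,k^{\alpha_j}\LL^{\beta_j k}$ used in the proof of Theorem~\ref{thm: zeta local volumes}. The shift sends class $c=e-1$ to class $0$ with $k$ replaced by $k+1$. Expanding $(k+1)^{\alpha_j}\LL^{\beta_j(k+1)}$ creates no new $(0,0)$-term, because $\beta_j=0$ forces $\alpha_j=0$. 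So the limits are exactly the ones you state.
\item You implicitly need $d\ge 1$, so that $1-\LL^{-d}$ is invertible and the sphere normalization makes sense.
\end{itemize}
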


\vspace{\baselineskip}

We also recall the following results:

\vspace{\baselineskip}

\begin{proposition}(Proposition 5.7 of \cite{ratajczak_motivic_2026})\label{prop : indep embed}
    Consider $X$ a variety over $k$ of dimension $d$ with an isolated singularity $O$. The motivic local density $\Theta_d(\underline{X},O)$ is independent of the choice of an embedding in the affine space, and we write it $\Theta_d^\text{mot}(X,O)$. 
\end{proposition}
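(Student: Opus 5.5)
The plan is to compare two arbitrary closed embeddings $i : X \hookrightarrow \mathbb{A}^n$ and $j : X \hookrightarrow \mathbb{A}^{n'}$ through a third one, the diagonal embedding $(i,j) : X \hookrightarrow \mathbb{A}^{n+n'}$. By symmetry it suffices to show that $\Theta_d(\underline{i(X)}, i(O)) = \Theta_d(\underline{(i,j)(X)}, (i,j)(O))$. After a translation we may assume $i(O)=0$ and $j(O)=0$.

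\textbf{Reduction to a graph map.} Since $i$ is a closed embedding, each coordinate function of $j$ is the restriction to $X$ of a polynomial $f_l \in k[y_1,\dots,y_n]$. Hence $(i,j)(X)$ is the image of $i(X)$ under the graph map
$$\varphi : \mathbb{A}^n \longrightarrow \mathbb{A}^{n+n'}, \quad y \longmapsto (y, f(y)),$$
which is an isomorphism of $i(X)$ onto $(i,j)(X)$, with inverse the linear projection. It therefore suffices to show two things for every $r \geq 0$:
\begin{enumerate}
\item $\varphi$ maps $\underline{i(X)} \cap B_n(0,r)$ bijectively onto $\underline{(i,j)(X)} \cap B_{n+n'}(0,r)$;
\item $\varphi$ preserves the $d$-dimensional motivic volume of definable subsets of $\underline{i(X)}$.
\end{enumerate}
Then the numerators of $\theta_d$ agree for every $r$. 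The denominators $\mu_d(B_d(0,r))$ do not depend on the embedding, so the functions $\theta_d(\cdot,O)$ coincide and so do their mean values at infinity (Definition \ref{def:local density}).

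\textbf{Step (1): $\varphi$ is an isometry on integral points.} The definable sets involved consist of $k[\![t]\!]$-points, and $f$ has coefficients in $k$. For $y, y' \in k[\![t]\!]^n$ we can write $f(y)-f(y')$ as a combination of the differences $y_s-y'_s$ with coefficients in $k[\![t]\!]$. This gives
$$v(f(y)-f(y')) \geq v(y-y'), \quad\text{hence}\quad v(\varphi(y)-\varphi(y')) = \min\bigl(v(y-y'),\, v(f(y)-f(y'))\bigr) = v(y-y').$$
In particular $v(\varphi(y)) = v(y)$ because $f(0)=0$. So $\varphi$ restricts to a bijection between the balls of radius $r$ as required, and it identifies the $k[\![t]\!]$-points of $i(X)$ with those of $(i,j)(X)$.

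\textbf{Step (2): preservation of $\mu_d$.} I would apply the change of variables Theorem \ref{thm: change var} to $h = \varphi|_{i(X)} : i(X) \to (i,j)(X)$. This map is an isomorphism, in particular proper and birational. It remains to check that the jacobian order $v(h^*(\Omega^d))$ vanishes, i.e.\ that the canonical volume forms of Definition \ref{def : volume form embed} correspond under $\varphi$.

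Concretely, the new $d$-forms $dx_I$ on $(i,j)(X)$ pull back to $k[\![t]\!]$-linear combinations of the old $dy_J$, since $df_l$ has polynomial coefficients evaluated at integral points. Conversely, the old forms appear among the new ones, namely those indexed only by the first $n$ coordinates. So on every point the infimum over $I$ of the orders $\alpha_I$ is the same for both embeddings, and the normalization $\inf_I \alpha_I = 0$ forces $\varphi^*|\omega_0| = |\omega_0|$.

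I expect this compatibility of canonical volume forms to be the main technical point. The delicate part is that it happens on the singular set $X$, where the $\alpha_I$ may be unbounded near $O$, and $\{O\}$ itself has dimension $0 < d$, so it can be discarded. The ball-preservation in Step (1) and the final comparison of mean values are then routine.
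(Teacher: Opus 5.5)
\textbf{Verdict: correct, by a different (and more explicit) route.} The paper does not prove this statement; it quotes it from \cite{ratajczak_motivic_2026}, so there is no internal argument to match.

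Your argument is a complete proof inside the Cluckers--Loeser formalism, built from two facts:
\begin{enumerate}
\item The graph map $\varphi$ is an isometry on $k[\![t]\!]$-points, because $f$ has coefficients in $k$ and $f(0)=0$. Hence $\varphi$ matches the balls $\underline{i(X)}\cap B_n(0,r)$ and $\underline{(i,j)(X)}\cap B_{n+n'}(0,r)$ for every $r$.
\item $\varphi$ preserves the canonical volume form. Each $\varphi^*dx_I$ is a $k[\![t]\!]$-combination of the $dy_J$, and every $dy_J$ with $J\subset\{1,\dots,n\}$ occurs as some $\varphi^*dx_J$. So the normalization $\inf_I\alpha_I=0$ forces the jacobian order of $\varphi$ to vanish.
\end{enumerate}

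The paper's framework suggests an intrinsic route instead. In its last section, $\underline{X}\cap S(O,n)$ is identified with $\{\gamma\mid \mathrm{ord}_t\mathfrak{M}(\gamma)=n\}$, which involves only the maximal ideal at $O$. Theorem \ref{thm: change var} then computes its volume on a resolution through $v(\pi^*\Omega^d_X)$, which is also an invariant of $X$. That route is shorter. However, it takes for granted that the canonical-volume-form measure on the embedded set $\underline{X}$ is an invariant of $X$, for instance that it agrees with the Denef--Loeser arc-space measure. That is exactly what your Step (2) checks by hand.

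For the same reason, Theorem \ref{thm: change var} as stated in the paper is not the tool you need in Step (2). Its jacobian term $v(h^*\Omega^d)$ is trivially $0$ for an isomorphism of varieties, so it would give the result for free without ever looking at the embedding. What you actually use, and verify correctly, is the change of variables for a definable bijection $\varphi: h\to h'$ of $d$-dimensional definable sets. There the jacobian is defined by $\varphi^*|\omega_0|_{h'}=\LL^{-\mathrm{ordjac}\,\varphi}|\omega_0|_h$, and your computation shows $\mathrm{ordjac}\,\varphi=0$.

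Two small additions would tighten the write-up:
\begin{itemize}
\item ``By symmetry'' should mention the coordinate permutation between $(i,j)(X)$ and $(j,i)(X)$, which obviously preserves balls and canonical volume forms.
\item If $X$ is not affine, you first pass to an affine neighbourhood of $O$. This is harmless: for $r\geq 1$ only arcs centred at $O$ enter $\underline{X}\cap B(O,r)$. Your graph-map argument, with an extra coordinate $1/g$ that is a unit on such arcs, compares different affine neighbourhoods.
\end{itemize}
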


\section{Zeta function}\label{section: zeta function}

In this section, we define a notion of residue for a rational fraction with coefficients in $\mathcal{M}_k$. We then define a Zeta function associated to normalized local volumes of a definable set and prove that we can compute its residue in 1, and the latter is equal to the motivic local density.

\vspace{\baselineskip}

\begin{definition}\label{def: residue}
    Let $\mathcal{M}_k=K_0(\text{Var}_k)\Big[\LL^{-1},\Big\{\frac{1}{1-\LL^{-i}}\Big\}_{i>0}\Big]$, and let $R:=\Q \otimes \mathcal{M}_k[T,(1-\LL^{-a}T^b)^{-1}]$, $a\in \Z, b \geq 1$. We define $S$ the subset of $R$ consisting of elements of the form $\frac{P}{Q}$ with $P \in \mathcal{M}_k[T], Q = \prod_{i=1}^r(1-\LL^{-a_i}T^{b_i})$ such that at most one of the $a_i$ equal 0. Informally, this ring consists of rational fractions with a pole of order at most 1 in 1.

    If exactly one $a_i=0$, then:
    \begin{align*}
        (1-T)\frac{P(T)}{Q(T)} = & \frac{P(T)}{1+T+\dots + T^{b_i-1}} \times \frac{1}{\prod_{k \neq i}(1-\LL^{-a_k}T^{b_k})} \\
        =& \frac{P(T)}{\tilde{Q}(T)}
    \end{align*}
    We have that $\tilde{Q}(1)=b_i\prod_{k\neq i}(1-\LL^{-a_k}) \neq 0$ and $\tilde{Q}(1)$ invertible in $\Q \otimes \mathcal{M}_k$. We define the following $\Q \otimes \mathcal{M}_k$-linear map:
    
    \begin{center}
    \begin{tabular}{ccccc}
        $\text{Res}_{T=1} :$ & $S$ & $\longrightarrow$ & $\Q \otimes \mathcal{M}_k$ \\
         & $\frac{P(T)}{Q(T)}$ & $\mapsto$ & $\text{Ev}_{T=1}((1-T)\frac{P(T)}{Q(T)}) = \frac{P}{\tilde{Q}}(1)$\\
    \end{tabular}
    \end{center}

    If none of the $a_i =0$, then $\text{Res}_{T=1}\Big(\frac{P(T)}{Q(T)}\Big)=0$.

    The definition of the map $\text{Res}_{T=1}$ does not depend on the representation. 
\end{definition}

\vspace{\baselineskip}

\begin{theorem}\label{thm: zeta local volumes}
    Let $D$ be a definable set of $h[m,0,0]$ of dimension $d$. The motivic local density of $D$ in a point $x$ can be computed as the residue in 1 of a Zeta function of normalized local volumes. More precisely, if one sets
    $$
    Z(T) = \sum_{n \geq 0} \frac{\mu_d(D \cap B_m(x,n))}{\mu_d(B_d(x,n))}T^n ,
    $$
    then $Z(T) \in S$ and
    $$
    \Theta_d^\text{mot}(D,x)=\mathrm{Res}_{T=1}(Z(T)).
    $$ 
\end{theorem}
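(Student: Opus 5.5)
Let $\theta(n):=\theta_d(D,x)(n)$. The plan is to pin down the shape of $\theta(n)$ as a function of $n$ using the Cluckers--Loeser structure theory. Then the generating series $Z(T)$ becomes an explicit rational fraction, and the residue at $T=1$ can be read off and compared with the mean value at infinity.

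\textbf{Step 1 (shape of $\theta$).} View $n$ as a parameter in $h[0,0,1]$. The family $n\mapsto D\cap B_m(x,n)$ is definable, so $n\mapsto \mu_d(D\cap B_m(x,n))$ is a constructible motivic function on $\Z$ by stability of constructible functions under integration \cite{cluckers_constructible_2008}. Likewise $\mu_d(B_d(x,n))=\LL^{-dn}$, up to the normalizing constant of the ball. Presburger cell decomposition of $\N$ yields an integer $e>0$ and finitely many terms such that, for each residue class $c$ modulo $e$ and all $n\ge N$ with $n\equiv c \bmod e$,
$$\theta(n)=\sum_j \alpha_{c,j}\, n^{k_{c,j}}\,\LL^{-\beta_{c,j} n},\qquad \alpha_{c,j}\in\Q\otimes\mathcal{M}_k,\ \beta_{c,j}\in\Q .$$
We may rescale $e$ so that $e\beta_{c,j}\in\Z$. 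This is exactly the form behind the existence of $\mathrm{MV}_\infty$ in Proposition-Definition 3.6 of \cite{forey_motivic_2017}, which I take as given.

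\textbf{Step 2 (constraints on the exponents).} Convergence of each subsequence $(\theta(ke+c))_k$ in the $\LL$-degree topology forces the following. Every term has $\beta_{c,j}\ge 0$. Every term with $\beta_{c,j}=0$ has $k_{c,j}=0$, since $n^k\alpha$ with $k\ge1$ and $\alpha\neq0$ does not converge. Hence the limit is $d_c=\sum_{j:\beta_{c,j}=0}\alpha_{c,j}$. If needed, terms with $\beta>0$ and a polynomial factor in $n$ are allowed, since they still tend to $0$.

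\textbf{Step 3 (the series).} Split
$$Z(T)=\sum_{n<N}\theta(n)T^n+\sum_{c}\sum_j \alpha_{c,j}\sum_{k\ge k_0} (ke+c)^{k_{c,j}}\LL^{-\beta_{c,j}(ke+c)}T^{ke+c}.$$
Each inner sum is a polynomial in $T$ divided by a power of $(1-\LL^{-e\beta_{c,j}}T^{e})$. The power is $(1-\LL^{-e\beta}T^e)^{k+1}$, obtained by applying $(T\,d/dT)^k$ to a geometric series. By Step 2, whenever $e\beta_{c,j}=0$ the exponent is $1$. Putting everything over a common denominator, the factors with $a=0$ may collide across different $c$. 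Before combining, I therefore group all $\beta=0$ terms into the single series
$$\sum_{n\ge N} d_{\bar n}T^n=\frac{P_0(T)}{1-T^e}.$$
This gives $Z\in S$, with exactly one factor $1-T^e$ having $a=0$. The remaining factors have $a=e\beta>0$, so they are invertible at $T=1$ thanks to the inverses $(1-\LL^{-i})^{-1}$ in $\mathcal{M}_k$. One has to check that repeated factors with $a\neq 0$ are allowed in the definition of $S$; they are, since only the $a_i=0$ factor is restricted.

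\textbf{Step 4 (residue).} The terms with $\beta>0$ are regular at $T=1$, so they contribute $0$ to $\mathrm{Res}_{T=1}$. The initial polynomial part also contributes $0$. For the remaining part,
$$(1-T)\sum_{n\ge N} d_{\bar n}T^n=\frac{\sum_{c}d_c T^{c'}}{1+T+\dots+T^{e-1}},$$
up to the shift to $n\ge N$. Evaluating at $T=1$ gives $\frac1e\sum_c d_c=\Theta_d(D,x)$. The value of $\mathrm{Res}_{T=1}$ is independent of the representation, as stated in Definition \ref{def: residue}.

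\textbf{Main obstacle.} The delicate point is Step 1 together with Step 2: justifying that $\theta(n)$ eventually has exactly this exponential-polynomial form with $\beta\ge0$ and no polynomial growth at $\beta=0$. For this I would invoke the rationality theorem of \cite{cluckers_constructible_2008} for constructible functions in one Presburger variable. Convergence of the subsequences then rules out the bad terms. If the rationality theorem directly gives $\sum_n \mu_d(D\cap B_m(x,n))T^n$ as a rational fraction in the ring $R$, I would instead substitute $T\mapsto \LL^{d}T$ and argue via partial fractions.
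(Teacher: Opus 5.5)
Your proposal is correct and follows essentially the same route as the paper. Presburger cell decomposition gives, on each class $n=ke+c$, the eventual form $\sum_j h_{j,c}\,k^{\alpha_j}\LL^{\beta_j k}$; convergence forces $\beta_j\le 0$, with no polynomial factor when $\beta_j=0$; and $Z(T)$ then splits as $\sum_c d_cT^c/(1-T^e)$, which has residue $\frac1e\sum_c d_c$, plus terms with denominators $(1-\LL^{\beta_j}T^e)^{\delta+1}$, $\beta_j<0$, whose residue at $T=1$ is $0$. One small fix: write the exponential factor in the variable $k=(n-c)/e$, as the paper does, rather than as $\LL^{-\beta n}$ with $\beta\in\Q$, because the offset $\LL^{-\beta c}$ need not be an integral power of $\LL$; your explicit treatment of the initial segment $n<N$ and of colliding $a=0$ factors is, if anything, more careful than the paper's.
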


\vspace{\baselineskip}

\begin{remark}
    One could also define a Zeta function as 
    $$
    Z(T)=\sum_{n \geq 0} \frac{\mu_d(D\cap S_m(x,n))}{\mu_d(S_d(x,n))}T^n 
    $$
    since this choice of local volumes leads to the same local density, as stated in Proposition \ref{prop:normalization balls or spheres}.
\end{remark}

\vspace{\baselineskip}

\begin{proof} [Proof of Theorem \ref{thm: zeta local volumes}]
    Set $a_n:=\theta_d(D,x)=\frac{\mu_d(D \cap B_m(x,n))}{\mu_d(B_d(x,n))}$. By Definition \ref{def:local density}, there exists an integer $e$ such that the subsequences $(a_{ke+i})_k$ converge to $d_i \in \mathcal{M}_\C$ for $i=0,\dots,e-1$. Set $\varepsilon_{k,i}:=a_{ke+i}-d_i$, hence:
    \begin{align*}
        Z(T)=& \sum_{i=0}^{e-1}\sum_{k \geq 0}d_iT^{ke+i}+\sum_{i=0}^{e-1}\sum_{k \geq 0}\varepsilon_{k,i}T^{ke+i} \\
        =& \sum_{i=0}^{e-1}d_i \frac{T^i}{1-T^e}+\sum_{i=0}^{e-1}\sum_{k \geq 0}\varepsilon_{k,i}T^{ke+i}.
    \end{align*}
    Clearly $\text{Res}_{T=1}\Big(\sum_{i=0}^{e-1}d_i \frac{T^i}{1-T^e}\Big)=\frac{1}{e}\sum_{i=0}^{e-1}d_i$. 

    By Presburger cell decomposition, one can assume that there is $k \in \mathbb{N}_{>0}$, $b \in \N$, some elements $h_{j,i}$ of $\mathbb{K}(\text{Var}_k)$, $\alpha_j \in \mathbb{N}, \beta_j \in \Z$ for $j=1,\dots,k$ and $i=0,\dots,e-1$ such that for all $n \geq b$, $n \equiv i \text{ mod } e$, one has
    $$
    a_{n}=\sum_{j=1}^kh_{j,i} \Big(\frac{n-i}{e}\Big)^{\alpha_j}\LL^{\beta_j\Big(\frac{n-i}{e}\Big)}.
    $$
    One can moreover assume that the pairs $(\alpha_j,\beta_j)$ are pairwise distinct, and the functions $h_{i,j}$ are not identically zero. For all $j, \beta_j \leq 0$ and if $\beta_j=0$, then $\alpha_j=0$. Let $j_0$ such that $(\alpha_{j_0},\beta_{j_0})=(0,0)$. One then has $\lim_{k \rightarrow+\infty} a_{ke+i}=h_{j_0,i}=d_i$. This implies that :
    \begin{align*}
        \varepsilon_{k,i} = \sum_{j \neq j_0}h_{j,i}k^{\alpha_j}\LL^{\beta_jk} +d_i-d_i.
    \end{align*}
    We can now compute : 
    \begin{align*}
        \sum_{i=0}^{e-1} \sum_{k \geq 0} \varepsilon_{k,i}T^{ke+i} &= \sum_{i=0}^{e-1} \sum_{k \geq 0} \sum_{j\neq j_0}h_{j,i}k^{\alpha_j}\LL^{\beta_jk} T^{ke+i} \\
        & = \sum_{i=0}^{e-1}T^i\sum_{j \neq j_0} h_{j,i} \sum_{k\geq 0} k^{\alpha_j}T^{ke} \LL^{\beta_jk} \\
        & = \sum_{i=0}^{e-1} T^i \sum_{j \neq j_0} h_{j,i} \frac{Q(T)}{(1-T^e\LL^{\beta_j})^{\delta+1}},
    \end{align*}
    where $Q(T) \in \mathcal{M}_\C[T]$ is a polynomial of degree $\delta$. Since in the previous expression, $\beta_j <0$ for all $j$, then it satisfies the condition of the definition \ref{def: residue}, and hence 
    $$
    \text{Res}_{T=1}\Big(\sum_{i=0}^{e-1} \sum_{k \geq 0} \varepsilon_{k,i}T^{ke+i}\Big)=0.
    $$
    This proves that $Z(T) \in S$ and $\text{Res}_{T=1}(Z(T)) = \frac{1}{e}\sum_{i=0}^{e-1} d_i = \Theta_d^\text{mot}(D,x)$. 
\end{proof}

\section{The Mather discrepancy} \label{section: mather disc}

In this section, we will recall the definition of the Mather discrepancy divisor, that we will use to compute the order of the jacobian that appears in the change of variables formula while computing the local density via a morphism of resolution. We set $k=\C$ in this section. Given an arbitrary variety $X$, we take a resolution of singularities $\pi : Y\longrightarrow X$ that factors through the Nash blow-up and define the relative Mather canonical divisor $\hat{K}_{Y/X}$.  
We recall the following definitions, assuming that all varieties are irreducible and reduced, over a fixed algebraically closed field k of characteristic zero, see \cite{ein_multiplier_2011}, \cite{ishii_mather_2011} for more details. \\

Let $X$ be a $\Q-$Gorenstein variety of index $r$ (i.e. normal and its canonical class $K_X$ is $\Q-$Cartier, so $rK_X$ is Cartier), and $\pi : Y \longrightarrow X$ a resolution of singularities of $X$. Then the usual discrepancy divisor $K_{Y/X}$ is the unique $\Q$-Cartier divisor supported on the exceptional locus of $\pi$ such that $rK_{Y/X}$ is linearly equivalent with $rK_Y-\pi^*(rK_X)$. Note that the usual discrepancy is defined only for a $\Q-$Gorenstein variety $X$, and the following Mather discrepancy is defined for every variety, even for non-normal variety. 

\vspace{\baselineskip}

\begin{definition}
    Let $X$ be a $d$-dimensional variety. The sheaf $\Omega_X^d = \wedge^d\Omega_X$ is invertible over the smooth locus $X_\text{reg}$ of $X$, hence the morphism 
    $$
    \pi : \PP(\Omega_X^d) = \text{Proj}(\text{Sym}(\Omega_X^d)) \longrightarrow X
    $$
    is an isomorphism over $X_\text{reg}$. The \textit{Nash blow-up} $\nash$ is the closure of $\pi^{-1}({X_\text{reg}})$ in $\PP(\Omega_X^d)$ (with the reduced scheme structure). Note that by construction we have a projective birational morphism $\nu : \nash \longrightarrow X$ and a surjective morphism $\nu^*(\Omega_X^d) \longrightarrow \mathcal{O}_{\PP(\Omega_X^d)}(1)_{|\nash}$. Set $\sheaf := \mathcal{O}_{\PP(\Omega_X^d)}(1)_{|\nash}$, it is an ample line bundle over $\hat{X}$. Furthermore, this is universal in the following sense: given a birational morphism of varieties $\phi :Y \longrightarrow X$, this factors through $\nu$ if and only if $\phi^*(\Omega_X^d)$ admits a morphism onto an invertible sheaf $\mathcal{L}$ on $Y$ (which identifies $\mathcal{L}$ to the quotient of $\phi^*(\Omega_X^d)$ by its torsion). 
\end{definition}

\vspace{\baselineskip}

\begin{definition}\label{def:mather disc}
Let $X$ be a variety of dimension $d$ and $f : Y \longrightarrow X$ be a resolution of the singularities, factoring through the Nash blow-up. Then, the image of the canonical homomorphism 
$$
f^* \wedge^d\Omega_X \longrightarrow \wedge^d\Omega_Y
$$
is an invertible sheaf of the form $J\wedge^d\Omega_Y$, where $J$ is the invertible ideal sheaf of $\mathcal{O}_Y$ that defines an effective divisor supported on the exceptional locus of $f$. This divisor is called the \textit{Mather discrepancy divisor} and denoted by $\hat{K}_{Y/X}$. For every prime divisor $E$ on $Y$, we define 
$$
\hat{k}_E:= \text{ord}_E(\hat{K}_{Y/X})
$$
and
$$
\hat{k}_E^\text{log}:= \hat{k}_E +1.
$$
\end{definition}

\vspace{\baselineskip}

\begin{remark}
If $X$ is smooth, then $\hat{k}_E = k_E$ and $\hat{K}_{Y/X} = K_{Y/X}$. 
\end{remark}

\vspace{\baselineskip}

\begin{definition}
    The \textit{Jacobian ideal} of a variety $X$ denoted $Jac_X$ is the 0-th Fitting ideal $\text{Fitt}_0(\Omega_X)$ of $\Omega_X$. This is an ideal whose support is the
singular locus of $X$. If $f : Y \longrightarrow X$ is a log resolution of $Jac_X$, we denote by $J_{Y/X}$ the effective divisor on $Y$ such that $Jac_X \cdot \mathcal{O}_Y = \mathcal{O}_Y(-J_{Y/X} )$.
\end{definition}

\vspace{\baselineskip}

\begin{remark}[Remark 2.7 of \cite{ein_multiplier_2011}]
    If $X$ is normal and locally a complete intersection, then $\hat{K}_{Y/X}-J_{Y/X}=K_{Y/X}$ the usual discrepancy divisor.
\end{remark}

\section{Proof of the formula} \label{section: proof thm}

The goal of this section is to prove Theorem \ref{thm: intro formula}. We first recall the setting. Let $\pi : Y \longrightarrow X$ be a resolution of the singularity that factors through the blowup of the maximal ideal and the Nash transform of $(X,O)$, and such
that the exceptional locus is simple normal crossing. We denote by $E_i$ the irreducible components of the exceptional divisor $D = \pi^{-1}(O)$.

\vspace{\baselineskip}

We state the following lemma, which is the generalization in dimension $d$ of Lemma 6.3 of \cite{ratajczak_motivic_2026}, before proceeding with the proof of Theorem \ref{thm: intro formula}.

\vspace{\baselineskip}

\begin{lemma}\label{lem:vois tub}
Let $X$ be a variety over $\C$ with a unique isolated singularity in $O$, and consider a resolution $\pi : Y \longrightarrow X$ such that the exceptional locus is simple normal crossing. Then $\pi^{-1}(\underline{X} \cap S(O,n)) = \mathcal{V}(D)$ where $\mathcal{V}(D)$ is the set of points of valuative distance exactly $\frac{n}{m_i}$ of every component $E_i$ of the exceptional divisor $D=\pi^{-1}(O)=\sum_{i\in J} m_iE_i$.
\end{lemma}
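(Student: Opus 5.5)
The plan is to reduce the statement to a local computation on $Y$. The ingredients are the factorization of $\pi$ through the blowup of the maximal ideal $\mathfrak{m}_O$ and the simple normal crossing hypothesis.

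\textbf{Step 1: turn the sphere condition into a condition on the pulled-back ideal.} First I would translate membership in $\underline{X}\cap S(O,n)$ into a condition on that ideal. Embed $(X,O)\subset(\C^N,0)$ with coordinates $x_1,\dots,x_N$ generating $\mathfrak{m}_O$. An arc $\gamma\in\underline{X}$ lies in $S(O,n)$ exactly when $\min_j v(x_j\circ\gamma)=n$, that is, when $\operatorname{ord}_t\gamma^*\mathfrak{m}_O=n$. Since $\pi$ is proper and birational, an arc not contained in $\{O\}$ (which has measure zero) lifts uniquely to an arc $\tilde\gamma$ on $Y$. Then $\operatorname{ord}_t\gamma^*\mathfrak{m}_O=\operatorname{ord}_t\tilde\gamma^*(\mathfrak{m}_O\mathcal{O}_Y)$. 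Because $\pi$ factors through the blowup of $\mathfrak{m}_O$, the ideal $\mathfrak{m}_O\mathcal{O}_Y$ is invertible and equals $\mathcal{O}_Y(-D)$ with $D=\sum_{i\in J}m_iE_i$. So the preimage consists of the lifted arcs whose order of contact with the divisor $D$ is exactly $n$.

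\textbf{Step 2: work chart by chart on the stratification $E_I^0$.} Next I would localize using the strata. Let $\tilde\gamma(0)\in E_I^0$. By the simple normal crossing hypothesis, choose local coordinates $y_1,\dots,y_d$ at $\tilde\gamma(0)$ with $E_i=\{y_i=0\}$ for $i\in I$. In these coordinates $D$ has local equation $u\prod_{i\in I}y_i^{m_i}$ with $u$ a unit. The valuative distance from $\tilde\gamma$ to $E_i$ is $v(y_i\circ\tilde\gamma)$; this does not depend on the chosen equation, since equations differ by units.
\begin{itemize}
\item When $|I|=1$, the condition $\operatorname{ord}_t\tilde\gamma^*\mathcal{O}_Y(-D)=n$ becomes $m_i\,v(y_i\circ\tilde\gamma)=n$. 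This says exactly that $\tilde\gamma$ is at valuative distance $n/m_i$ from $E_i$. It also forces $m_i\mid n$, which matches the cyclic behaviour of the local volumes.
\item For arcs centred on $E_I^0$ with $|I|\ge 2$, I would use the same local equation to express the condition through the distances to each $E_i$, $i\in I$. I would then check that the description "distance $n/m_i$ to every component" holds componentwise. The reverse inclusion comes from the same computation read backwards: an arc on $Y$ at the prescribed distances pushes forward to an arc with $\operatorname{ord}_t\gamma^*\mathfrak{m}_O=n$.
\end{itemize}

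\textbf{Step 3: glue.} Since the $E_I^0$ partition $D$ and the local conditions are intrinsic, the chartwise equalities glue to the global equality $\pi^{-1}(\underline{X}\cap S(O,n))=\mathcal{V}(D)$. Arcs centred outside $D$ map to arcs not through $O$, so they do not meet the sphere.

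\textbf{Main obstacle.} I expect the hardest step to be the crossing strata $|I|\ge 2$. There the single condition $\sum_{i\in I}m_i\,v(y_i\circ\tilde\gamma)=n$ must be reconciled with the componentwise requirement as the statement phrases it. I would first try to show that the locus where this reconciliation fails has dimension $<d$, or contributes nothing to the density. This is in the spirit of the proposition saying that subsets of smaller dimension do not change $\Theta_d$, and it would mirror what is done in Lemma 6.3 of \cite{ratajczak_motivic_2026} for surfaces.
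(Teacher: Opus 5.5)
Your Step 1 is the paper's argument, stated more carefully. The paper's proof works on a chart around a point of $E_I^0$ and asserts that both $\mathcal{V}(D)\cap\underline{U}$ and $\pi^{-1}(\underline{X}\cap S(O,n))\cap\underline{U}$ equal $\{v(\prod_{i\in I}x_i^{m_i})=n\}$. That assertion silently uses your two observations. First, $\mathfrak{m}_O\mathcal{O}_Y=\mathcal{O}_Y(-D)$ because $\pi$ factors through the blowup of $O$, which is a hypothesis of the section and is implicit in writing $\pi^{-1}(O)=\sum m_iE_i$. Second, arcs not contained in $\{O\}$ lift uniquely to $Y$.

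The gap is in how you handle the crossing strata. You plan to verify the literal condition ``distance $n/m_i$ to every component'', and otherwise to show that the locus where it fails has dimension $<d$ or does not affect $\Theta_d$. Neither can work.

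\emph{The literal condition is incompatible with the sphere.} Over $E_I^0$ with $|I|\ge 2$, an arc with $v(y_i\circ\tilde\gamma)=n/m_i$ for all $i\in I$ has contact order $|I|\,n$ with $D$. It therefore lies over $S(O,|I|n)$, not $S(O,n)$. Also, over $E_i^0$ an arc has distance $0$, not $n/m_j$, to the other components $E_j$.

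\emph{The ``failure locus'' is not negligible.} By your own Step 1, the set $\{\sum_{i\in I}m_il_i=n,\ l_i=v(y_i\circ\tilde\gamma)\ge 1\}$ over $E_I^0$ lies in $\pi^{-1}(\underline{X}\cap S(O,n))$. It is a finite union of cylinders of full dimension $d$. On a chart $U$, before the Jacobian factor, its measure is $[E_I^0\cap U](\LL-1)^{|I|}\LL^{-d-\sum_i l_i}$. These pieces produce exactly the terms with $|I|\ge 2$ in Theorem \ref{thm:formula}, for instance the $E_i\cap E_j$ terms of the surface formula. So they must belong to $\mathcal{V}(D)$; they cannot be discarded.

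The fix is to take $\mathcal{V}(D)$ to be what the paper's proof, and the proof of Theorem \ref{thm:formula}, actually use. That is the set of arcs with $\operatorname{ord}_t\tilde\gamma^*\mathcal{O}_Y(-D)=n$, locally $\{v(\prod_{i\in I}y_i^{m_i})=n\}$ near $E_I^0$. The phrase ``distance $n/m_i$ to $E_i$'' describes this set only over $E_i^0$. With that reading, your identity $\operatorname{ord}_t\gamma^*\mathfrak{m}_O=\operatorname{ord}_t\tilde\gamma^*\mathcal{O}_Y(-D)$ is already the whole proof, and no reconciliation step is needed.
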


\vspace{\baselineskip}

\begin{proof}
Let us set $D=\pi^{-1}(O)=\sum_{i\in J} m_iE_i$ the exceptional divisor, with $m_i$ the multiplicity of the exceptional component $E_i$. For $i \in J$ and $I \subset J$, we consider the schemes $E_i^0=E_i \smallsetminus \cup_{j \neq i} E_j, E_I=\cap_{i\in I} E_i$ and $E_I^0=E_I \smallsetminus \cup_{j \in J\smallsetminus I} E_j$. Since the divisor is supposed to be normal crossing, we can cover $Y$ by affine open subsets $U$ on which there exist regular functions $x_1,\dots,x_d$ inducing an étale map $h : U \longrightarrow \mathbb{A}_k^d$ such that on $U$, each component of the exceptional divisor is defined by a monomial in $x_1,\dots,x_d$. 

In each point of $E_I^0$, we have:
\begin{align*}
    \mathcal{V}(D)\cap \underline{U}&=\{p\in U \mid  v(\prod_{i \in I}x_i(p)^{m_i})=n\} \text{ and} \\
    \pi^{-1}(\underline{X} \cap S(O,n))\cap \underline{U} &= \{p\in U \mid \pi(p) \in S(O,n)\} \\
    & = \{p\in U \mid v(\prod_{i \in I}x_i(p)^{m_i})=n\}.
\end{align*}

\end{proof}

\vspace{\baselineskip}

\begin{theorem}\label{thm:formula}
    Let $X$ be an algebraic variety of dimension $d$ over $\C$ with a unique isolated singularity $O$. Let $\pi : Y \longrightarrow X$ be a good resolution of the singularity, that satisfies the previous conditions. Set $E_i, i \in J$ the irreducible components of the exceptional locus, and $m_i$ its multiplicity. For $i \in J$ and $I \subset J$, we consider the schemes $E_i^0=E_i \smallsetminus \cup_{j \neq i} E_j, E_I=\cap_{i\in I} E_i$ and $E_I^0=E_I \smallsetminus \cup_{j \in J\smallsetminus I} E_j$.
    
    The motivic local density of $X$ at the point $O$ is given by:
    $$
    \Theta^\text{mot}_d(X,O) = \sum_{i\mid \alpha_i=0} \frac{1}{m_i} \sum_{\substack{I\subset J\\ I=\{i,\dots,i_s\}}} (\LL-1)^{|I|}\prod_{j \in I\setminus\{i\}}  \frac{\LL^{-\alpha_{j}}}{1-\LL^{-\alpha_{j}}}\frac{[E_I^0]}{\LL^d-1} ,
    $$
    which lies in $\mathcal{M}_\C \otimes \Q$, and where $\alpha_i=\hat{k}_i+1-dm_i$. 
\end{theorem}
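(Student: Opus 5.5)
The plan is to compute the Zeta function $Z(T)$ of Theorem \ref{thm: zeta local volumes} explicitly via the resolution $\pi$, and then take its residue at $T=1$. I will work with spheres rather than balls; by Proposition \ref{prop:normalization balls or spheres} and the remark after Theorem \ref{thm: zeta local volumes}, this gives the same density. Since $\mu_d(S_d(O,n)) = (1-\LL^{-1})\LL^{-dn}$, it suffices to compute $\mu_d(\underline{X}\cap S(O,n))$ for $n\ge 1$. Lemma \ref{lem:vois tub} identifies $\pi^{-1}(\underline{X}\cap S(O,n))$ with the set of arcs $\gamma$ on $Y$ with $v(\prod_i x_i^{m_i}(\gamma))=n$. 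The change of variables Theorem \ref{thm: change var} then gives
\begin{align*}
\mu_d(\underline{X}\cap S(O,n)) = \int_{\pi^{-1}(\underline{X}\cap S(O,n))} \LL^{-v(\pi^*\Omega_X^d)}|\omega_Y|.
\end{align*}
Because $\pi$ factors through the Nash blow-up, Definition \ref{def:mather disc} shows that the image of $\pi^*\Omega^d_X$ is $\mathcal{O}_Y(-\hat K_{Y/X})\Omega^d_Y$. Hence the order of the Jacobian along an arc is $\sum_i \hat k_i\, v(x_i(\gamma))$.

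Next I stratify by $E_I^0$, using the étale charts from the proof of Lemma \ref{lem:vois tub}. Arcs with origin in $E_I^0$ and $v(x_j)=k_j\ge 1$ for $j\in I$ form a set of measure $[E_I^0](\LL-1)^{|I|}\LL^{-d}\LL^{-\sum_{j\in I}k_j}$; this is the standard computation on a smooth variety with normal crossings. Summing gives
\begin{align*}
\mu_d(\underline{X}\cap S(O,n)) = \sum_{I\neq\emptyset} [E_I^0](\LL-1)^{|I|}\LL^{-d}\sum_{\substack{k\in\N_{\ge1}^I\\ \sum m_jk_j=n}} \LL^{-\sum_j (\hat k_j+1)k_j}.
\end{align*}
Multiplying by $\LL^{dn}=\LL^{d\sum m_jk_j}$ turns the exponent into $-\sum_j\alpha_jk_j$. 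So, up to the factor $\LL^{-d}/(1-\LL^{-1})$ (to be reconciled with $1/(\LL^d-1)$ when handling the ball/sphere normalisation, possibly through an indexing shift), we get
\begin{align*}
Z(T)=\sum_{I} c_I \prod_{j\in I}\frac{\LL^{-\alpha_j}T^{m_j}}{1-\LL^{-\alpha_j}T^{m_j}}.
\end{align*}
Here $c_I$ is a multiple of $[E_I^0](\LL-1)^{|I|}$.

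The key geometric input is that $\alpha_j\ge 0$ for every $j$. This is needed both for $Z(T)$ to lie in $S$ and for the local density to exist; a negative $\alpha_j$ would force normalized volumes to blow up. I also need that no $I$ contains two indices with $\alpha=0$, otherwise the pole at $T=1$ would have order at least $2$. I expect this step to be the main obstacle. My first attempt will use the factorization through the blowup of the maximal ideal: $\pi^*\mathfrak m = \mathcal{O}(-\sum m_iE_i)$, and the generic linear projection $X\to\C^d$ gives a $d$-form whose pullback vanishes to order at least $dm_i-1$ along $E_i$, so $\hat k_i\ge dm_i-1$. The components with $\alpha_i=0$ should be exactly the divisors that dominate the tangent cone, i.e. those mapping onto components of the exceptional divisor of the blowup. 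I would argue that two such components cannot meet: along their intersection the projection would have to be finite of higher order, making the discrepancy strictly larger. Alternatively, I could invoke Theorem \ref{thm: zeta local volumes}, which says $Z\in S$ a priori, together with the explicit form above to rule out a double pole.

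Granting this, the residue is computed term by term. A factor with $\alpha_j>0$ evaluates at $T=1$ to $\LL^{-\alpha_j}/(1-\LL^{-\alpha_j})$. The factor with $\alpha_i=0$ is $T^{m_i}/(1-T^{m_i})$, whose residue in the sense of Definition \ref{def: residue} is $1/m_i$. Terms with no index having $\alpha=0$ contribute $0$. Collecting terms by the unique $i\in I$ with $\alpha_i=0$ yields the stated formula, after checking that the constant prefactors combine to $(\LL-1)^{|I|}/(\LL^d-1)$.
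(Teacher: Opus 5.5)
Your outline follows the paper's proof: change of variables along $\pi$, then Lemma~\ref{lem:vois tub}, then the Jacobian order $\sum_i\hat k_i\,v(x_i)$ from the Mather discrepancy, then the Denef--Loeser computation on each stratum $E_I^0$, then the product $\prod_{j\in I}\LL^{-\alpha_j}T^{m_j}/(1-\LL^{-\alpha_j}T^{m_j})$, and finally the residue $1/m_i$ from the factor with $\alpha_i=0$.

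\textbf{The normalisation constant is wrong.} The $d$-dimensional sphere is $S_d(O,n)=B_d(O,n)\smallsetminus B_d(O,n+1)$. So $\mu_d(S_d(O,n))=\LL^{-dn}-\LL^{-d(n+1)}=(1-\LL^{-d})\LL^{-dn}$, not $(1-\LL^{-1})\LL^{-dn}$. With the correct value the normalising factor is $\LL^{dn}/(1-\LL^{-d})$, as in the paper, and the prefactor is exactly $\LL^{-d}/(1-\LL^{-d})=1/(\LL^d-1)$. With your value the answer is off by $(1-\LL^{-d})/(1-\LL^{-1})=1+\LL^{-1}+\dots+\LL^{-(d-1)}$. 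This is not a power of $\LL$, so no indexing shift can absorb it. Switching between balls and spheres cannot absorb it either, since that does not change the density (Proposition~\ref{prop:normalization balls or spheres}).

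\textbf{The double-pole step.} The paper uses your second option, but stratum by stratum. It argues that each stratum's contribution $(a_n^\Lambda)_n$ has a mean value at infinity (citing \cite{forey_motivic_2017}), then invokes Theorem~\ref{thm: zeta local volumes} to exclude two vanishing $\alpha$'s in one stratum. Applied only to the total $Z$, you would also have to rule out cancellation of higher-order poles between strata.

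Your first option is a genuinely different and more self-contained route, but as written it is incomplete. For $\alpha_i\ge0$ you need the bound for every generator of $\Omega^d_X$, or that the generic projection realises the minimum defining $\hat k_i$. The claim that two $\alpha=0$ components cannot meet is only heuristic. Here is a way to make it precise.
\begin{itemize}
\item Let $y_1,\dots,y_N$ be coordinates of an embedding with $O$ at the origin, $\mathfrak m$ the maximal ideal of $O$, and $p\in E^0_{\{i,j\}}$. This stratum is nonempty whenever $E_i\cap E_j\neq\emptyset$, by the normal crossing hypothesis.
\item Since $\mathfrak m\mathcal O_Y=\mathcal O_Y(-\sum_l m_lE_l)$, near $p$ we can write $y_s\circ\pi=g\,u_s$ with $g=x_i^{m_i}x_j^{m_j}$.
\item Using $\frac{dg}{g}\wedge\frac{dg}{g}=0$,
\begin{align*}
\pi^*(dy_{s_1}\wedge\dots\wedge dy_{s_d}) = g^d\Bigl(du_{s_1}\wedge\dots\wedge du_{s_d}+\sum_{k}\pm u_{s_k}\,\tfrac{dg}{g}\wedge\bigwedge_{r\neq k}du_{s_r}\Bigr) = x_i^{dm_i-1}x_j^{dm_j-1}\,\beta .
\end{align*}
Here $\beta$ is a regular $d$-form that vanishes at $p$, because $x_ix_j\frac{dg}{g}=m_ix_j\,dx_i+m_jx_i\,dx_j$.
\item Hence the image of $\pi^*\Omega^d_X$ at $p$ lies in $x_i^{dm_i-1}x_j^{dm_j-1}\mathfrak m_p\,\Omega^d_{Y,p}$, with $\mathfrak m_p$ the maximal ideal of $\mathcal O_{Y,p}$. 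So $\alpha_i=\alpha_j=0$ is impossible.
\item The same computation at a point of $E_i^0$, with $g=x_i^{m_i}$, shows every generator pulls back to order at least $dm_i-1$ along $E_i$. So $\alpha_i\ge0$.
\end{itemize}
This gives the simple pole at $T=1$ directly, without appealing to the a priori existence of the density.
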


\vspace{\baselineskip}

\begin{proof}
The proof strategy is the following. We first use the theorem of change of variable given by the morphism $\pi$ of resolution of the singularity. We then use a Zeta function of local volumes and compute its residue in 1 to get to the motivic local density, as proved in Section \ref{section: zeta function}.

Since $\pi$ induces an isomorphism $Y \smallsetminus \pi^{-1}(O) \longrightarrow X \smallsetminus \{O\}$, we have the following equality:
    \begin{align*}
        \mu_d(\underline{X} \cap S(O,n)) & = \mu_d(\underline{X}\smallsetminus\{O\} \cap S(O,n)) \\
        & = \int_{\pi^{-1}(\underline{X}\smallsetminus\{O\} \cap S(O,n))} \LL^{-v(\pi^*(\Omega_X^d))},
    \end{align*}
    using the theorem of change of variables \ref{thm: change var}. Using Lemma \ref{lem:vois tub}, we then have:
    $$
    \mu_d(\underline{X}\smallsetminus\{O\} \cap S(O,n)) = \int_{\mathcal{V}(D)} \LL^{-v(\pi^*(\Omega_X^d))}.
    $$
    To compute local density, we need to normalize this volume by the volume of the $d$-dimensional sphere. Hence, we need to compute:
    $$
    \frac{\LL^{dn}}{1-\LL^{-d}}\int_{\mathcal{V}(D)} \LL^{-v(\pi^*(\Omega_X^d))}.
    $$
    As in the proof of the previous Lemma, in each point of the support of the divisor, we can define a finite Zariski cover as follow. In a point of $E_I^0$, working locally on $U$ an open set of $Y$, we can assume that $E_I^0$ is defined by the equation $\prod_{i\in I}x_i=0$. 
    By additivity of the motivic measure, it is sufficient to work on those local neighborhoods. The function $v(\pi^*(\Omega_X^d))$ can be expressed in terms of Mather discrepancies. Hence, near each divisor, the power of $\LL$ appearing in the integral is very easy to explicit. For $\mathcal{V}(D)$ defined locally by $v(x_1^{m_1}\dots x_d^{m_d})=n$, the function $v(\pi^*(\Omega_X^d))$ is equal to $-\hat{k}_1v(x_1)- \dots - \hat{k}_dv(x_d)$.

    \vspace{\baselineskip}

    We now define the Zeta function as:
    $$
    Z(T)=\sum_{I \subset J } Z_I(T),
    $$
    where the definition of $Z_I(T)$ is explained in what follows. As explained before, using the theorem of change of variables and expressing the Jacobian factor with the Mather discrepancies, and with $I=\{1,\dots,d\}$, we have to compute the following:

\begin{align*}
    &\frac{\LL^{dn}}{1-\LL^{-d}}\int_{v(x_1^{m_1}\dots {x_d}^{m_d})=n} \LL^{-\hat{k}_1v(x_1)- \dots -\hat{k}_dv(x_d)} \\
    =& \frac{\LL^{dn}}{1-\LL^{-d}} \sum_{\Lambda \subset I} \int_{\substack{ v(x_{i_1}^{m_{i_1}}\dots x_{i_d}^{m_{i_d}})=n \\ \Lambda=\{i_1,\dots,i_d\} \\ v(x_j)=0, ~j \notin \Lambda }} \LL^{-\sum \hat{k}_{i_j}v(x_{i_j})} \\
    =&\frac{\LL^{dn}}{1-\LL^{-d}}\sum_{\Lambda \subset I} \sum_{\substack{l_i \geq 1, ~i\in \Lambda \\ l_i=0,~i \notin \Lambda \\ \sum_{i \in \Lambda}m_il_i=n}} \LL^{-\sum_{i \in \Lambda}(\hat{k}_i+1)l_i} (\LL-1)^{|\Lambda|}\LL^{-d}[E_\Lambda^0\cap U] \\
    =& \frac{1}{1-\LL^{-d}}\sum_{\Lambda \subset I} \sum_{\substack{l_i \geq 1, ~i\in \Lambda \\ l_i=0,~i \notin \Lambda \\ \sum_{i \in \Lambda}m_il_i=n}} \LL^{-\sum_{i \in \Lambda}(\hat{k}_i+1-dm_i)l_i} (\LL-1)^{|\Lambda|}\LL^{-d}[E_\Lambda^0\cap U] \\
\end{align*}

We set $a_n^\Lambda=\displaystyle \sum_{\substack{l_i \geq 1, ~i\in \Lambda \\ l_i=0,~i \notin \Lambda \\ \sum_{i \in \Lambda}m_il_i=n}} \LL^{-\sum_{i \in \Lambda}(\hat{k}_i+1-d_im_i)l_i}$, and:
\begin{align*}
    Z_\Lambda(T)=& \sum_{n \geq 0}a_n^\Lambda T^n\\
    =& \sum_{l_i \geq 1, i \in \Lambda}\LL^{-\sum (\hat{k}_i+1-dm_i)l_i}T^{\sum_i m_il_i}\\
    =& \prod_{i \in \Lambda}\sum_{l_i \geq 1}(\LL^{-\alpha_i} T^{m_i})^{l_i}, \text{ where } \alpha_i=\hat{k}_i+1-dm_i \\
    =&\prod_{i \in \Lambda} \frac{\LL^{-\alpha_i}T^{m_i}}{1-\LL^{-\alpha_i}T^{m_i}}. \numberthis \label{eq:zeta}
\end{align*}

\vspace{\baselineskip}

    Since the sequences $(a_n^\Lambda)_n$ are positive constructible motivic functions, by Proposition-Definition 3.6 of  \cite{forey_motivic_2017}, the local density exists and the sequences $(a_n^\Lambda)_n$ admit a mean value at infinity for all $\Lambda$. Hence, using Theorem \ref{thm: zeta local volumes}, at most one of the $\alpha_i$ in (\ref{eq:zeta}) is equal to zero. 
    Locally, consider $\mathcal{V}(D)$ defined by $v(x_1^{m_1}\dots x_d^{m_d})=n$. If all of the $\alpha_i \neq 0$, then we are in the case where the Zeta function has no pole at 1, and the motivic local density is equal to zero. Suppose that there exists an $\alpha_i=0$, and suppose for simplicity that $i=1$. We only consider subsets $\Lambda=\{j_1,\dots,j_r\}, r \leq d$ of $J$ such that $j_1=1$, otherwise the motivic local density will be zero. Then:
    \begin{align*}
        Z_\Lambda(T)=& \frac{T^{m_1}}{1-T^{m_1}}\prod_{i \in \Lambda\setminus \{1\}} \frac{\LL^{-\alpha_i}T^{m_i}}{1-\LL^{-\alpha_i}T^{m_i}}.
    \end{align*}

    We now set $Z_I(T)=\sum_{\Lambda \subset I} Z_\Lambda(T)$. 
    
    Since:
    \begin{align*}
        \text{Res}_{t=1}Z_\Lambda(T)=& \frac{1}{m_1} \prod_{i \in \Lambda\setminus \{1\}} \frac{\LL^{-\alpha_i}}{1-\LL^{-\alpha_i}}  \\
        =& \text{MV}_\infty(a_n^\Lambda),
    \end{align*}

    then:
    \begin{align*}
        & \text{MV}_\infty\Big(\frac{1}{1-\LL^{-d}}\sum_{\Lambda \subset I} (\LL-1)^{|\Lambda|} \sum_{\substack{l_i \geq 1, ~i\in \Lambda \\ l_i=0,~i \notin \Lambda \\ \sum_{i \in \Lambda}m_il_i=n}} \LL^{-\sum_{i \in \Lambda}(\hat{k}_i+1-dm_i)l_i} \LL^{-d}[E_\Lambda^0\cap U]\Big) \\
        =&\text{MV}_\infty\Big( \frac{1}{1-\LL^{-d}}\sum_{\Lambda \subset I}(\LL-1)^{|\Lambda|}a_n^\Lambda\LL^{-d}[E_\Lambda^0\cap U]\Big)\\
        =&\frac{1}{1-\LL^{-d}} \sum_{\Lambda \subset I} \frac{1}{m_1}(\LL-1)^{|\Lambda|}\prod_{i \in \Lambda\setminus \{1\}} \frac{\LL^{-\alpha_i}}{1-\LL^{-\alpha_i}}\LL^{-d}[E_\Lambda^0\cap U]\Big)\\
        =& \frac{1}{m_1}\sum_{\Lambda \subset I}(\LL-1)^{|\Lambda|}\prod_{i \in \Lambda\setminus \{1\}} \frac{\LL^{-\alpha_i}}{1-\LL^{-\alpha_i}} \frac{[E_\Lambda^0\cap U]}{\LL^d-1}\\
        =&\text{Res}_{T=1}Z_I(T).
    \end{align*}

    \vspace{\baselineskip}
    
    By considering all subsets $I$ of $J$ such that for an $i, \alpha_i=0$, by additivity the result follow. 
\end{proof}
\vspace{\baselineskip}

\begin{remark}
    In dimension 2, this result had been proved in \cite{ratajczak_motivic_2026}. Let us briefly recall what are the inner rates related to the bilipschitz geometry of the singularity. Those inner rates were first introduced in \cite{birbrair_thick-thin_2014} by L. Birbrair, W. D. Neumann and A. Pichon, and were studied from the non-Archimedean view point in \cite{silva_inner_2022} by A. Belotto da Silva, L. Fantini and A. Pichon. They study the metric structure of the germ of an isolated complex surface singularity $(X, 0)$ by means of an infinite family of numerical analytic invariants, called inner rates. A resolution of $(X,0)$ is good if its exceptional locus is a normal crossing divisor. Given a good resolution $\pi:  Y \longrightarrow X$ of $(X,0)$ that factors through the blowup of the maximal ideal and through the Nash modification of $(X,0)$, an irreducible component $E_i$ of the exceptional divisor $\pi^{-1}(0)$ of $\pi$, and a small neighborhood $\mathcal{N}(E_i)$ of $E_i$ in $Y$ with a neighborhood of each double point of $\pi^{-1}(0)$ removed, the inner rate $q_i$ of $E_i$ is a rational number that measures how fast the subset $\pi(\mathcal{N}(E_i))$ of $(X,0)$ shrinks when approaching the origin. The inner rate $q_i$ is independent of the choice of an embedding of $(X,0)$ into a smooth germ, only depending on the analytic type of $(X,0)$ and on the divisor $E_i$. The finite set of inner rates which are bilipschitz invariants for the inner metric has been described explicitly in \cite{birbrair_thick-thin_2014}. It is worth noting that an inner rate equal to 1 characterizes a metrically conical region of the surface, it corresponds to linear inner contact between two complex curve germs on $(X,0)$.

    Since the inner rate associated to an irreducible component of the exceptional divisor is equal to $q_i=\frac{\hat{k}_i+1}{m_i}-1$ (see Proposition 4.8 of \cite{ratajczak_motivic_2026}), then $\alpha_i=0$ is equivalent to $q_i=1$. Moreover, $\alpha_j=m_j(q_j-1)$. If $I=\{i,i_2\}$ such that $E_i \cap E_{i_2} =\emptyset$, then $[E_I^0]=0$ and this term will not contribute to the sum. Hence, we find back the formula from the previous paper (See \cite{ratajczak_motivic_2026}):
    $$
    \Theta_2^\text{mot}(X,O)=\sum_{i \mid q_i=1}\frac{1}{m_i}\Big(\frac{[E_i^0]}{\LL+1}+\sum_{j \mid E_i \cap E_j \neq \emptyset} \frac{\LL^{-(q_j-1)m_j}}{1-\LL^{-(q_j-1)m_j}} \frac{\LL-1}{\LL+1}\Big).
    $$
\end{remark}

\vspace{\baselineskip}

\begin{definition}
    We define the formal inner rate associated to an irreducible component of the exceptional divisor $E_i$ of $\pi^{-1}(0)$ as $q_i:=\frac{\alpha_i}{m_i}+1$. In other words, $q_i=\frac{\hat{k}_i^\text{log}}{m_i}-d+1$. 
\end{definition}

\vspace{\baselineskip}

\begin{proposition}
    The formal inner rate associated to an irreducible component $E_i$ of $\pi^{-1}(0)$ satisfies $q_i \geq 1$. Consider a component $E_i$ of $\pi^{-1}(0)$ such that $q_i=1$. Then for all $j$ such that $E_i \cap E_j \neq \emptyset$, the associated formal inner rate $q_j >1$. 
\end{proposition}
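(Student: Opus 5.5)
The plan is to prove the two assertions separately. The inequality $q_i\ge 1$ is equivalent to $\alpha_i\ge 0$, i.e. $\hat{k}_i\ge dm_i-1$, and I would prove it by a direct local computation. The statement about neighbours I would prove by contradiction, using Theorem \ref{thm: zeta local volumes} together with the explicit local Zeta functions $Z_\Lambda$ from the proof of Theorem \ref{thm:formula}.

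\emph{Step 1: $\alpha_i\ge 0$.} Embed $(X,O)$ in $\mathbb{A}^N$ with coordinates $x_1,\dots,x_N$ vanishing at $O$. Then $\Omega^d_X$ is generated near $O$ by the forms $dx_{j_1}\wedge\dots\wedge dx_{j_d}$. By Definition \ref{def:mather disc}, $\hat{k}_i$ is the minimal vanishing order along $E_i$ of the pullbacks $\pi^*(dx_{j_1}\wedge\dots\wedge dx_{j_d})$, computed in $\wedge^d\Omega_Y$. Work at a general point of $E_i$ with local coordinates $y_1,\dots,y_d$ and $E_i=\{y_1=0\}$. Since $\pi$ factors through the blowup of the maximal ideal, $\min_j \mathrm{ord}_{E_i}(\pi^*x_j)=m_i$, and in any case each $\pi^*x_j=y_1^{m_i}u_j$ with $u_j$ regular. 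Then
$$d(\pi^*x_j)=y_1^{m_i-1}\bigl(m_iu_j\,dy_1+y_1\,du_j\bigr).$$
In a wedge of $d$ such forms, at most one factor can contribute $dy_1$, so every other factor contributes an extra $y_1$. Hence the wedge vanishes to order at least $d(m_i-1)+(d-1)=dm_i-1$ along $E_i$. This gives $\hat{k}_i+1-dm_i\ge 0$, i.e. $q_i\ge 1$.

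\emph{Step 2: neighbours of a component with $q_i=1$.} Suppose $\alpha_i=0$, and let $j$ satisfy $E_i\cap E_j\neq\emptyset$. Step 1 gives $\alpha_j\ge 0$, so suppose for contradiction that $\alpha_j=0$. Take a chart $U$ as in the proof of Theorem \ref{thm:formula} meeting $E^0_{\{i,j\}}$, which is nonempty of dimension $d-2$. The normalized volumes $\theta(n)$ decompose additively, as in that proof, into terms
$$\frac{(\LL-1)^{|\Lambda|}\LL^{-d}[E^0_\Lambda\cap U]}{1-\LL^{-d}}\,a_n^\Lambda .$$
For $\Lambda=\{i,j\}$, $a^\Lambda_n$ counts the solutions $(l_i,l_j)\in\N_{\ge1}^2$ of $m_il_i+m_jl_j=n$. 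This number grows linearly along suitable residue classes of $n$; equivalently, $Z_\Lambda=\frac{T^{m_i+m_j}}{(1-T^{m_i})(1-T^{m_j})}$ has a double pole at $1$. All the other terms have coefficients $a_n^{\Lambda'}$ that are sums of monomials $\LL^{-c}$ with $c\ge 0$, by Step 1, and they enter with effective classes $[E^0_{\Lambda'}\cap U]$. So the $\{i,j\}$-term cannot be cancelled, and $\theta(n)$ would not converge along the residue classes, or equivalently $Z(T)\notin S$. This contradicts Definition \ref{def:local density} (existence of the mean value, from \cite{forey_motivic_2017}) and Theorem \ref{thm: zeta local volumes}.

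\emph{Main obstacle.} The delicate point is the no-cancellation claim in $\mathcal{M}_\C$, which is not a positive ring. I would handle it by applying a ring morphism that respects the $\LL$-degree filtration and sends effective classes to elements with positive leading coefficient, for instance the Poincaré or Hodge--Deligne polynomial, extended to the localization. The realization of $\theta(n)$ then has a leading term at fixed degree $0$ whose coefficient is at least a positive constant times $n$. This coefficient tends to infinity, which contradicts convergence of the realized sequences. A cleaner alternative is purely geometric: read $\alpha_i=\alpha_j=0$ at a point of $E_i\cap E_j$ from the wedge computation of Step 1 with two coordinates $y_1,y_2$, and check that equality in both orders is impossible there.
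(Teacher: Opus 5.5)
Your proof is correct, and it is organized differently from the paper's.

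The paper derives both assertions from the existence of the local density, stratum by stratum. For $q_i\ge 1$ it notes that arcs lifting to $E_i^0$ contribute a term $\LL^{-n(\frac{\hat k_i+1}{m_i}-d)}$, which must not blow up. For the second assertion it quotes the step in the proof of Theorem~\ref{thm:formula} where each $a_n^\Lambda$ is asserted to have a mean value, so that Theorem~\ref{thm: zeta local volumes} allows at most one vanishing $\alpha$ in each $Z_\Lambda$.

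Your Step 1 replaces the first argument by a direct computation with $d(y_1^{m_i}u_j)=y_1^{m_i-1}(m_iu_j\,dy_1+y_1\,du_j)$. It is elementary, uses no motivic integration, and proves $\hat k_i\ge dm_i-1$ for every component of $\pi^{-1}(O)$.

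In Step 2 you use the existence of a mean value only for the total normalized volume, which is exactly what Forey's theorem gives. You pay for this with the positivity argument. Step 1 makes every $a_n^{\Lambda'}$ a sum of $\LL^{-c}$ with $c\ge 0$, so after a Hodge--Deligne realization the degree-$0$ coefficient of $\theta(n)$ is $\sum_{\Lambda'}c_{\Lambda'}N_n(\Lambda')$. Here $N_n(\Lambda')=\#\{(l_k)\in\N_{\ge 1}^{\Lambda'} : \sum m_kl_k=n,\ \sum\alpha_kl_k=0\}$, all $c_{\Lambda'}\ge 0$, and $c_{\{i,j\}}>0$. This coefficient is unbounded on $n\in e\,m_im_j\N$. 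That makes explicit the no-cancellation point, which the paper handles only by asserting that each $a_n^\Lambda$ separately has a mean value.

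Your ``cleaner alternative'' also works and is worth writing out. At $p\in E^0_{\{i,j\}}$, write $\pi^*x_k=y_1^{m_i}y_2^{m_j}u_k$ and $\eta=m_iy_2\,dy_1+m_jy_1\,dy_2$. Since $\eta\wedge\eta=0$, one gets $\pi^*(dx_K)=y_1^{dm_i-1}y_2^{dm_j-1}\bigl(\eta\wedge\Phi_K+y_1y_2\,du_K\bigr)$ for a suitable $(d-1)$-form $\Phi_K$, and the bracket vanishes at $p$ because $\eta(p)=0$. Hence the Mather ideal $(y_1^{\hat k_i}y_2^{\hat k_j})$ lies in $y_1^{dm_i-1}y_2^{dm_j-1}\mathfrak{m}_p$, which forces $(\alpha_i,\alpha_j)\neq(0,0)$. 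Together with Step 1, this makes the whole proposition a purely local algebraic fact, independent of the existence of motivic local densities.
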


\vspace{\baselineskip}

\begin{proof}
    The proof uses local density as in the previous section. Since we need to compute integrals like $\displaystyle \LL^{dn} \int_{v(x_i^{m_i})=n,v(x_j)=0, i\neq j}\LL^{-\hat{k}_iv(x_i)}$, then we get a term $\LL^{-(\hat{k}_i+1)\frac{n}{m_i}}\LL^{dn}$ where $m_i$ divides $n$ for example. So we get a power of $\LL$ in $-n(\frac{\hat{k}_i}{m_i}+\frac{1}{m_i}-d)$, and since the mean value at infinity exists, we need $\frac{\hat{k}_i}{m_i}+\frac{1}{m_i}-d \geq 0$, i.e. $\hat{k}_i+1 \geq dm_i$. From the proof of theorem \ref{thm:formula}, we already have that if $\alpha_i =0$, hence $\alpha_j \neq 0$ for all $j$ such that $E_i \cap E_j \neq \emptyset$. Therefore, if $q_i =1$, then $q_j \neq 1$. 
\end{proof}

\section{Link with usual motivic Zeta functions}

Zeta functions had been studied in various settings. We can cite the so-called Igusa Zeta function $Z_\text{Igusa}(f,s)$ \cite{Igusa1974}, defined as a $p$-adic parametric integral of a polynomial $f$ with coefficients in a $p$-adic field. Denef and Loeser defined in \cite{DenefLoeser1998} the invariant motivic (Igusa) Zeta function associated to a regular function on a nonsingular algebraic variety in characteristic zero. The latter specializes to the classical $p$-adic Igusa Zeta function. This definition had been generalized in \cite{VeysZunigaGalindo2008} where they associate to a sheaf of ideal $\mathcal{I}$ on a smooth algebraic variety (over a field of characteristic zero) a motivic Zeta function. One can also cite the work of W. Veys in \cite{Veys_2001}: on any $\Q-$Gorenstein variety, he associates a motivic Zeta function. Combining these different results, we define a motivic Zeta function associated to a sheaf of ideals on a singular variety. The previous definitions can be recovered by considering the corresponding setup. We can also apply this new definition to our setting of local density, and one finds back the Zeta function used in Section \ref{section: zeta function}, defined with normalized local volumes. 

\vspace{\baselineskip}

\begin{definition}
    Let $X$ be a $k-$variety of dimension $d$, and $\mathcal{I}$ a sheaf of ideal on $X$. For an element $\gamma \in \underline{X}$, we define $\text{ord}_t(\mathcal{I}(\gamma)):=\min_{f \in \mathcal{I}_{\gamma(0)}}\text{ord}_t(\gamma^* (f))$. This defines a simple function on $\underline{X}$, meaning that all of its fibers are measurable. 

    If $\mathcal{I}$ is locally generated by $(f_1,\dots,f_r)$, then
    $$
    \text{ord}_t\mathcal{I}(\gamma)=\min_{1 \leq i \leq r} \text{ord}_t(f_i(\gamma)).
    $$ 
\end{definition}

\vspace{\baselineskip}

\begin{definition}\label{def: big zeta}
    Let $X$ be an affine $k$-variety of dimension $d$, and $\mathcal{I}$ be a sheaf of ideals on $X$. Define 
    $$
    Z_{X,\mathcal{I}}(T):=\sum_{n \geq 0} \mu_d(\{ \gamma \in \underline{X} \mid \text{ord}_t\mathcal{I}(\gamma)=n\})T^n,
    $$ 
    Informally, this is equivalent to defining the following integral:
    $$
    \int_{\underline{X}} |\mathcal{I}(\gamma)|^s d\mu_d,
    $$
    where $|\mathcal{I}(\gamma)|=\LL^{-\text{ord}_t(\mathcal{I}(\gamma))}$, $T=\LL^{-s}$ and the volume form is coming from the affine variety (See Section \ref{section : mot int}).
\end{definition}

\vspace{\baselineskip}

\begin{proposition}
    The Zeta function $Z_{X,\mathcal{I}}(T)$ is an element of $\mathcal{M}_k[T,(1-\LL^{-a}T^b)^{-1}], a\in \Z, b \geq 1$.
\end{proposition}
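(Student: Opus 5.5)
We argue by resolution of singularities and the change of variables formula, Theorem \ref{thm: change var}, as in the proof of Theorem \ref{thm:formula}.

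First, take a log resolution $\pi : Y \longrightarrow X$ that factors through the Nash blow-up $\nu:\nash\to X$ and principalizes both $\mathcal{I}$ and the Jacobian data. Concretely, $\mathcal{I}\cdot\mathcal{O}_Y = \mathcal{O}_Y(-\sum_i N_iE_i)$, and the image of $\pi^*\Omega_X^d \to \Omega_Y^d$ is $\mathcal{O}_Y(-\hat{K}_{Y/X})\Omega_Y^d$ with $\hat{K}_{Y/X}=\sum_i \hat{k}_iE_i$ (Definition \ref{def:mather disc}). We also require that $\sum_i E_i$, the union of the supports of both divisors together with the exceptional locus, is simple normal crossing. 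Such a resolution exists by Hironaka.

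Next, write the Zeta function as
$$Z_{X,\mathcal{I}}(T)=\sum_{n}\mu_d(A_n)T^n, \qquad A_n=\{\gamma \mid \mathrm{ord}_t\mathcal{I}(\gamma)=n\}.$$
By Theorem \ref{thm: change var}, using $\mathrm{ord}_t\mathcal{I}(\pi(\gamma))=\mathrm{ord}_t(\mathcal{I}\mathcal{O}_Y)(\gamma)$, we get
$$\mu_d(A_n)=\int_{\pi^{-1}(A_n)}\LL^{-v(\pi^*\Omega^d_X)}|\omega_Y|.$$
The set where $\pi$ is not an isomorphism pulls back to a set of lower dimension, hence of measure zero, so it can be ignored. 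Now cover $Y$ by affine opens $U$ carrying étale coordinates $x_1,\dots,x_d$ in which each $E_i$ through $U$ is $\{x_i=0\}$. Stratify by the $E_I^0\cap U$. On arcs with $v(x_i)=l_i\ge1$ for $i\in I$ and $\gamma(0)\in E_I^0$, the integrand is $\LL^{-\sum_{i\in I}\hat{k}_il_i}$ and the condition becomes $\sum_{i\in I} N_il_i=n$. The measure of such a cylinder is $(\LL-1)^{|I|}\LL^{-d}[E_I^0\cap U]\LL^{-\sum l_i}$, exactly as in the computation in the proof of Theorem \ref{thm:formula}. Summing over $n$ then gives
$$Z_{X,\mathcal{I}}(T)=\LL^{-d}\sum_{I}(\LL-1)^{|I|}[E_I^0]\prod_{i\in I}\frac{\LL^{-(\hat{k}_i+1)}T^{N_i}}{1-\LL^{-(\hat{k}_i+1)}T^{N_i}},$$
with the convention that indices with $N_i=0$ contribute the factor $\frac{\LL^{-(\hat k_i+1)}}{1-\LL^{-(\hat k_i+1)}}$, a constant. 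This constant is in $\mathcal{M}_k$ because $\hat{k}_i+1\ge1$.

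The main obstacle is the bookkeeping for components $E_i$ with $N_i=0$ when the sum over $l_i$ is unconstrained by $n$. These are handled because $\hat{k}_i\ge 0$, so $\hat{k}_i+1>0$ and the sum converges in $\mathcal{M}_k$ to an element of the allowed localization. One must also check that the local decomposition glues, by additivity over a finite Zariski cover refined by the $E_I^0$, and that the identification of $v(\pi^*\Omega^d_X)$ with $\sum\hat{k}_iv(x_i)$ holds because $\pi$ factors through $\nash$. Each summand then visibly lies in $\mathcal{M}_k[T,(1-\LL^{-a}T^b)^{-1}]$ with $a=\hat{k}_i+1$ and $b=N_i\ge1$, which proves the proposition.

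Alternatively, one could simply invoke the rationality theorem for constructible motivic functions with a $\Z$-parameter from \cite{cluckers_constructible_2008}: $n\mapsto\mu_d(A_n)$ is constructible, so its generating series is rational of this form. The resolution argument has the advantage of giving explicit denominators.
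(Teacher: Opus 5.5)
Your argument is correct, but your main route differs from the paper's. The paper's proof is exactly your closing ``alternative'': it considers the definable set $Z=\{(\gamma,n)\in\underline{X}\times\N \mid \mathrm{ord}_t\mathcal{I}(\gamma)=n\}$ with its projection to $\N$. It notes that $n\mapsto\mu_d(A_n)$ is a constructible motivic function of the Presburger parameter $n$, and then applies the rationality theorem (Theorem 14.4.1 of Cluckers--Loeser). You instead compute the series explicitly, using a log resolution factoring through the Nash blow-up, the change of variables formula, and the standard cylinder count on the strata $E_I^0$. This gives a Denef--Loeser/Veys--Z\'u\~niga-Galindo type formula with the Mather log discrepancies $\hat{k}_i+1$ in place of $\nu_i$, which for $\mathcal{I}=\mathfrak{M}$ specializes to the formula in the Remark following the proposition.

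The paper's route is a one-liner, needs no resolution, and applies verbatim to any definable family with a $\Z$-parameter, but it gives no control over which denominators actually occur. Your route costs Hironaka (principalization together with factorization through $\hat{X}$), the change of variables for singular $X$, and some bookkeeping. In return it gives an explicit formula and the sharper information that only factors $(1-\LL^{-a}T^b)^{-1}$ with $a=\hat{k}_i+1\ge 1$ and $b=N_i$ appear.

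One small wording point: what is negligible is the set of arcs \emph{contained in} the locus where $\pi$ is not an isomorphism, which has dimension $<d$. It is not the set of arcs merely passing through that locus; those carry exactly the contributions you are computing. The valuative criterion of properness then lifts every remaining arc uniquely to $Y$.
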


\vspace{\baselineskip}

\begin{proof}
    Set $Z:=\{(\gamma,n)\in \underline{X}\times \N \mid \text{ord}_t\mathcal{I}(\gamma)=n\}$, and $\pi : Z \longrightarrow \N, (\gamma,n) \mapsto n$. Using the rationality results from \cite{cluckers_constructible_2008}, see Theorem 14.4.1, we can conclude that $\mu_d(\mathbf{1}_{Z_{|\pi^{-1}(n)}})$ is a constructible motivic function and 
    $$
    \sum_{n \geq 0} \mu_d(\{ \gamma \in \underline{X} \mid \text{ord}_t\mathcal{I}(\gamma)=n\})T^n
    $$
    lies in the ring $\mathcal{M}_k[T,(1-\LL^{-a}T^b)^{-1}]$, $a\in \Z, b \geq 1$.
\end{proof}

\vspace{\baselineskip}

In our special case for computing the motivic local density, we set $k=\C$ and $I=\mathfrak{M}$ the maximal ideal. Since
$$
\mu_d(\gamma \in \underline{X} \mid \text{ord}_t\mathfrak{M}(\gamma)=n)=\mu_d(\underline{X} \cap S(O,n)),
$$
we have the following equality:

\begin{align*}
    \frac{Z_{X,\mathfrak{M}}(T\LL^d)}{1-\LL^{-d}} & = \sum_{n \geq 0} \frac{\mu_d(\underline{X} \cap S(O,n))\LL^{dn}}{1-\LL^{-d}}T^n \\
    &= \sum_{n \geq 0} \frac{\mu_d(\underline{X}\cap S(O,n))}{\mu_d(S(O,n))}T^n,
\end{align*}
and we find back the Zeta function used in Section \ref{section: zeta function}, with local volumes defined with spheres.

\vspace{\baselineskip}

\begin{remark}
    Using the proof of Theorem \ref{thm:formula}, for $\pi : Y \longrightarrow X$ a good resolution of the singularity, $E_i, i  \in J$ the irreducible components of the exceptional locus, $E_I=\cap_{i \in I} E_i, E_I^0=E_I \setminus \cup_{j \in J \setminus I} E_j$, we can deduce that
    $$
    \frac{Z_{X,\mathfrak{M}}(T\LL^d)}{1-\LL^{-d}}=\frac{1}{\LL^d-1}\sum_{I \subset J}[E_I^0] \prod_{i \in I} \frac{(\LL-1)\LL^{-\hat{k}_i-1+dm_i}T^{m_i}}{1-\LL^{-\hat{k}_i-1+dm_i}T^{m_i}}.
    $$
\end{remark}

\vspace{\baselineskip}

From the definition \ref{def: big zeta}, we can recover the definition of the Zeta function defined in \cite{VeysZunigaGalindo2008}. Denote for $n \in \mathbb{N}$ by $\mathfrak{X}_{n}$ the variety of $(n+1)-$jets $\gamma$ on $X$ for which $\text{ord}_t(\gamma^*\mathcal{I})=n$. In the smooth case, since $\mu_d(\{ \gamma \in X(k[t]/(t^{n+1})) \mid \text{ord}_\mathcal{I}(\gamma)=n\})=[\mathfrak{X}_{n}]\LL^{-(n+1)d}$, one finds back the definition of the motivic Zeta function associated to a sheaf of ideals, from \cite{VeysZunigaGalindo2008}, up to a constant factor:

\vspace{\baselineskip}

\begin{definition}[Definition 2.16 of \cite{VeysZunigaGalindo2008}]
    Let $X$ be a smooth algebraic variety of dimension $d$ over a field $k$ of characteristic zero, and $\mathcal{I}$ a sheaf of ideals on $X$. The motivic Zeta function associated to $\mathcal{I}$ is the formal power series 
    $$
    Z(\mathcal{I},T)=\LL^{-d} \sum_{n \geq 0} [\mathfrak{X}_{n}](\LL^{-d}T)^n.
    $$
\end{definition}

\vspace{\baselineskip}

\begin{remark}
    Using a log-principalization of the sheaf of ideals, they also get a formula for the Zeta function, depending on the numerical data of the resolution and the exceptional components. We recall this theorem.
\end{remark}

\vspace{\baselineskip}

\begin{theorem} [Theorem 2.17 of \cite{VeysZunigaGalindo2008}]
    Let $\sigma : Y \longrightarrow X$ be a log-principalization of $\mathcal{I}$. With the notations $E_i,N_i,\nu_i, i \in J$, where $(N_i,\nu_i), i \in J$ are the numerical data of $\sigma$, and $E_I^0$ for $I \subset J$, we have:
    $$
    Z(\mathcal{I},T)=\LL^{-d}\sum_{I \subset J} [E_I^0]\prod_{i \in I}\frac{(\LL-1)T^{N_i}}{\LL^{\nu_i}-T^{N_i}}.
    $$
\end{theorem}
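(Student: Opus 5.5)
The plan is to reduce both sides to a sum over strata $E_I^0$ and compare the resulting expressions directly, checking that the two normalizations differ only by the constant $\LL^{-d}$. The theorem is stated for a log-principalization $\sigma : Y \longrightarrow X$ of $\mathcal{I}$ on a smooth $X$, so $\sigma^{-1}\mathcal{I}\cdot\mathcal{O}_Y=\mathcal{O}_Y(-\sum_i N_iE_i)$ and $K_{Y/X}=\sum_i(\nu_i-1)E_i$. I would argue with the arc formalism of Section \ref{section : mot int}, so that the $n$-th coefficient of $Z(\mathcal{I},T)$ is, up to the factor $\LL^{-d}\LL^{(\ldots)}$, the measure $\mu_d(\{\gamma\in\underline{X}\mid \text{ord}_t\mathcal{I}(\gamma)=n\})$. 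Concretely, the identity $\mu_d=[\mathfrak{X}_n]\LL^{-(n+1)d}$ recalled just before the definition shows
$$Z(\mathcal{I},T)=\sum_{n\geq 0}\mu_d(\{\text{ord}_t\mathcal{I}=n\})\,T^n,$$
which is $Z_{X,\mathcal{I}}(T)$ of Definition \ref{def: big zeta}.

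Next I will apply the change of variables Theorem \ref{thm: change var} to the proper birational map $\sigma$. Since $X$ is smooth, $v(\sigma^*\Omega_X^d)=\text{ord}_t K_{Y/X}$, and arcs lying in $\sigma^{-1}(\text{Sing})$ form a measure-zero set. This gives
$$\mu_d(\{\text{ord}_t\mathcal{I}=n\})=\int_{\{\sum_i N_i\,\text{ord}_{E_i}(\gamma)=n\}}\LL^{-\sum_i(\nu_i-1)\text{ord}_{E_i}(\gamma)}|\omega_Y|.$$
I then cover $Y$ by affine opens $U$ with étale coordinates $x_1,\dots,x_d$ in which the $E_i$ are coordinate hyperplanes, exactly as in Lemma \ref{lem:vois tub}. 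I stratify arcs by the point $\gamma(0)\in E_I^0$ and by the tuple $(l_i)_{i\in I}$, $l_i=v(x_i)\geq 1$. Along each stratum the set $\{v(x_i)=l_i,\ i\in I\}$ lying over $E_I^0\cap U$ has measure $[E_I^0\cap U](\LL-1)^{|I|}\LL^{-d}\LL^{-\sum_{i\in I} l_i}$, and additivity glues the charts. This is the same computation as in the proof of Theorem \ref{thm:formula}.

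Summing $T^{\sum N_il_i}$ against $\LL^{-\sum \nu_il_i}$ over $l_i\geq 1$ yields the geometric series
$$\prod_{i\in I}\frac{\LL^{-\nu_i}T^{N_i}}{1-\LL^{-\nu_i}T^{N_i}}=\prod_{i\in I}\frac{T^{N_i}}{\LL^{\nu_i}-T^{N_i}}.$$
Multiplying by $(\LL-1)^{|I|}[E_I^0]\LL^{-d}$ and summing over $I\subset J$ (with $I=\emptyset$ giving $[X\setminus\cup E_i]\LL^{-d}$) produces the stated formula.

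The main obstacle is the bookkeeping of the normalization, namely the powers of $\LL^{-d}$ and $\LL^{-dn}$ hidden in the correspondence between $[\mathfrak{X}_n]$ and the arc measure. Equivalently, this is the claim that $Z$ coincides with $Z_{X,\mathcal{I}}$ exactly and not just up to the substitution $T\mapsto\LL^{\pm d}T$. I would first fix the convention by checking the case $X=\mathbb{A}^1$, $\mathcal{I}=(x)$, and then carry that convention through the local product computation.
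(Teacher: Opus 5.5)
Your proposal is correct, and it is the standard argument for this result. You push forward along $\sigma$ by change of variables, using $\sigma^{-1}\mathcal{I}\cdot\mathcal{O}_Y=\mathcal{O}_Y(-\sum_i N_iE_i)$ and $K_{Y/X}=\sum_i(\nu_i-1)E_i$. You then stratify arcs by $\gamma(0)\in E_I^0$ and by $(\text{ord}_{E_i}\gamma)_{i\in I}$, and sum a geometric series. This is exactly the computation the paper carries out in the proof of Theorem \ref{thm:formula}; the paper itself only quotes this statement from \cite{VeysZunigaGalindo2008} and does not reprove it.

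The normalization you single out as the main obstacle is immediate. The set $\{\text{ord}_t\mathcal{I}=n\}$ is the cylinder over $\mathfrak{X}_n\subset X(k[t]/(t^{n+1}))$, so its measure is $[\mathfrak{X}_n]\LL^{-(n+1)d}$, and $Z(\mathcal{I},T)=Z_{X,\mathcal{I}}(T)$ exactly, with no substitution in $T$.

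Two minor slips:
\begin{itemize}
\item Since $X$ is smooth there is no singular locus; the measure-zero set you should discard is the set of arcs contained in $\bigcup_i E_i$.
\item The $I=\emptyset$ term is $[Y\setminus\bigcup_i E_i]\LL^{-d}$, not $[X\setminus\bigcup_i E_i]\LL^{-d}$.
\end{itemize}
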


\vspace{\baselineskip}

In the special case where $\mathcal{I}=(f)$ for a morphism $f: X \longrightarrow \mathbb{A}^1_\C$, one can recover the results of \cite{DenefLoeser1998} from our definition \ref{def: big zeta}. We recall this setting. Denote $X_{n,f}$ the reduced subscheme of $X(k[t]/(t^{n+1}))$ defined by $X_{n,f}=\{ \gamma \in X(k[t]/(t^{n+1})) \mid \text{ord}_t(f(\gamma))=n\}$. Then they define (See Section 2 of \cite{DenefLoeser1998}):
$$
\int f^s :=\sum_{n\geq 0} [X_{n,f}]\LL^{-(n+1)d}T^n,
$$
which is equal to $Z_{X,(f)}(T)$ from our definition \ref{def: big zeta} in this setting.

\vspace{\baselineskip}




\bibliographystyle{plain}
\bibliography{Local_density_varieties}

\end{document}